\renewcommand{\epsilon}{\varepsilon}
\newcommand{\floor}[1]{\left[ #1 \right]}
\newcommand{\N}{\mathbb{N}}
\newcommand{\Q}{\mathbb{Q}}
\newcommand{\Z}{\mathbb{Z}}
\newcommand{\C}{\mathbb{C}}
\newcommand{\Mod}[1]{\left(\mathrm{mod}\ #1\right)}
\theoremstyle{plain}
\newtheorem{thm}{Theorem}[section]
\newtheorem{lem}[thm]{Lemma}
\newtheorem{cor}[thm]{Corollary}
\theoremstyle{definition}
\newtheorem*{rmk}{Remark}
\newtheorem*{defn}{Definition}
\title{\vspace*{-55pt} Fermat quotients and the Ankeny-Artin-Chowla conjecture}
\author{Nic Fellini\\Queen's University\\ \texttt{n.fellini@queensu.ca} 
\and 
M. Ram Murty\footnote{Research partially supported by an NSERC Discovery grant}\\ Queen's University\\
\texttt{murty@queensu.ca}
}
\date{}
\begin{document}

\maketitle
\begin{abstract}
    In this article, we present streamlined proofs of results of Ankeny, Artin, and Chowla concerning the fundamental unit of the real quadratic field $\Q(\sqrt{p})$ for primes $p\equiv 1 \Mod{4}$ while providing a generalization of their conjecture. Using our generalization, we relate Fermat quotients of quadratic non-residues $\Mod{p}$ to sums of harmonic numbers. 
\end{abstract}

\section{Introduction}
In 1951, Ankeny, Artin, and Chowla \cite{Ankeny1951} derived four congruence relations for the class number of real quadratic fields $\Q(\sqrt{p})$ with $p$ a prime. In a later paper, \cite{Ankeny1952}, they published proofs of only three relations and perhaps inadvertently omitted the proof of the fourth relation. This gap was filled in by Carlitz \cite{Carlitz1953} who gave a proof but again omitted to write out several key steps of the proof. The missing steps involve the use of the $p$-adic logarithm, a profound idea introduced in \cite{Ankeny1952} and developed later by Iwasawa \cite{Washington}.  Looking back at the paper of Ankeny, Artin and Chowla, one cannot fail to see the birth of two fundamental concepts of number theory: one is the $p$-adic logarithm and the other is the use of the group ring to study cyclotomic fields, both of which had a transformative influence in the number theory of the twentieth century.

In this paper, we will amplify this idea giving simplified proofs of the results in \cite{Ankeny1951} and \cite{Ankeny1952}. At the same time, we extend these results and investigate a conjecture of Ankeny, Artin, and Chowla explained in the next section. 

\section{An extension of the Ankeny-Artin-Chowla congruence}
Let $p$ be a prime $\equiv 1 \Mod{4}$. The fundamental unit of $\Q(\sqrt{p})$ can be written as 
\begin{equation}\label{unit}
\epsilon = \frac{t+u\sqrt{p}}{2}
\end{equation}
for positive integers $t$ and $u$. Ankeny, Artin and Chowla \cite{Ankeny1951} stated the result
\begin{equation}\label{eq:1}
   \frac{2h u}{t} \equiv \frac{A+B}{p} \Mod{p} 
\end{equation}
where $h$ is the class number of $\Q(\sqrt{p})$, $A$ is the product of the quadratic residues $\Mod{p}$ lying in $[1,p]$ and $B$ is the product of the quadratic non-residues $\Mod{p}$ lying in $[1,p]$. This was one of the four results stated in \cite{Ankeny1951} and  three of these four results were proved in \cite{Ankeny1952}. Carlitz \cite{Carlitz1953} noted this gap and provided a proof but was vague in several key steps. The essential ingredient of the $p$-adic logarithm needed to derive (\ref{eq:1}) is not clearly enunciated in \cite{Carlitz1953} or \cite{Ankeny1952}. We will fill this gap in our discussion below. We will also use this occasion to give streamlined proofs of the results in \cite{Ankeny1951} and \cite{Ankeny1952}.

In their paper, Ankeny, Artin and Chowla conjecture that for primes $p\equiv 1\pmod 4$, we always have $p\nmid u$.  They verified their conjecture when $p\equiv 5 \pmod 8$ and $p<2000$.  Van der Poorten, te Riele and Williams \cite{PRW} \cite{PRW2} verified the conjecture for all primes $p< 2\cdot 10^{11}$.  The interest in this conjecture lies in the following observation.  In 1960, Ankeny and Chowla \cite{AC} proved that $h<p$.  For any given prime $p$, the quantities $A$, $B$ and $\epsilon$ are easily computed (using the continued fraction of $\sqrt{p}$ in the case of $\epsilon$) and so, 
(\ref{eq:1}) provides a congruence for $h\pmod p$.  But since $h<p$, the reduced residue gives the exact value of $h$ {\bf provided $u$ is not divisible by $p$}.

We briefly address remarks made by Ankeny and Chowla \cite{AC} regarding $h<p$.  In their paper, they wrote that this estimate is not well-known and cite a comment of Carlitz \cite{Carlitz1953} to this effect.  They then give a proof of the stronger estimate $h=O(\sqrt{p}).$  In a postscript to their paper, they provide another proof of the weaker estimate $h=O(p^{1/2+\epsilon})$ due to Mordell which uses the theory of reduced binary quadratic forms.  The proof given in \cite{AC} employs Dirichlet's class number formula though the authors never say so.
In fact, the formula on top of page 146 is pulled out of the ``hat of Dirichlet.''  

It is relatively painless to give a proof that $h=O(\sqrt{p})$ using Dirichlet's formula in the form
\begin{equation}\label{dirichlet}
 \frac{2h\log \epsilon}{\sqrt{|d_K|}}= \sum_{n=1}^\infty \frac{\chi(n)}{n} , 
 \end{equation}
where $\chi(n)=(d_K/n)$ is the Kronecker symbol, $d_K$ is the discriminant of $K={\mathbb Q}(\sqrt{p}).$
From (\ref{unit}), we see that $\epsilon \gg \sqrt{p}$ so that the left hand side of (\ref{dirichlet}) is
$$ \gg \frac{h\log p}{\sqrt{p}}. $$
On the other hand, the right hand side of (\ref{dirichlet}) can be re-written (by partial summation) as
$$ \sum_{n=1}^{p-1} \frac{\chi(n)}{n} + \int_p^\infty \frac{S(x)}{x^2} dx, $$
 where $S(x)=\sum_{n<x}\chi(n)$.  
As $S(x)=O(p)$, we see that the right hand side of (\ref{dirichlet}) is bounded by $O(\log p)$ and the claim is now immediate.  We remark that an identical proof shows that the class number $h(D)$ of ${\mathbb Q}(\sqrt{D})$ with $D$ a fundamental discriminant is also $O(\sqrt{|D|})$.  We will use this fact in a later section.

A natural question that arises is what result emerges in (\ref{eq:1}) if we replace $A$ and $B$ by complete sets $\{a_s\}_{s=1}^{(p-1)/2}$, $\{b_s\}_{s=1}^{(p-1)/2}$ of quadratic residues and non-residues (respectively) but not lying necessarily in $[1,p]$. We will assume that the $a_s, b_s$ are all positive. This question is easily answered as follows. 

Writing 
\begin{align*}
    a_s &= \langle a_s \rangle + p \left[ \frac{a_s}{p} \right]\\
    b_s &= \langle b_s \rangle + p \left[ \frac{b_s}{p} \right]
\end{align*}
where $\langle x \rangle$ denotes the reduced residue of $x\Mod{p}$. Then, 
\begin{align*}
    A &= \prod_{s=1}^{(p-1)/2} \left(a_s - p \left[ \frac{a_s}{p} \right] \right)\\
    B &= \prod_{s=1}^{(p-1)/2} \left(b_s - p \left[ \frac{b_s}{p} \right] \right).
\end{align*}
We easily see that 
\begin{align*}
    A & \equiv A^* - pA^* \sum_{s=1}^{(p-1)/2} \left[\frac{a_s}{p} \right] \frac{1}{a_s} \Mod{p^2}\\
    B & \equiv B^* - pB^* \sum_{s=1}^{(p-1)/2} \left[\frac{b_s}{p} \right] \frac{1}{b_s} \Mod{p^2}
\end{align*}
where 
\[
A^* = \prod_{s=1}^{(p-1)/2} a_s\,\,\,\,\, \text{ and }\,\,\,\,\, B^* = \prod_{s=1}^{(p-1)/2} b_s.
\]
Thus, 
\[
A+B \equiv A^* +B^* -p\left(A^* \sum_{s=1}^{(p-1)/2} \left[\frac{a_s}{p} \right] \frac{1}{a_s} + B^* \sum_{s=1}^{(p-1)/2} \left[\frac{b_s}{p} \right] \frac{1}{b_s} \right)\Mod{p^2}.
\]
Therefore, 
\[
\frac{A+B}{p} \equiv \frac{A^* +B^*}{p} -\left(A^* \sum_{s=1}^{(p-1)/2} \left[\frac{a_s}{p} \right] \frac{1}{a_s} + B^* \sum_{s=1}^{(p-1)/2} \left[\frac{b_s}{p} \right] \frac{1}{b_s} \right)\Mod{p}.
\]
This leads to the following variant of (\ref{eq:1}). 
\begin{thm}\label{thm:2.1}
    Let $p\equiv 1 \Mod{4}$ be a prime and $\{a_s\}, \{b_s\}$ be positive numbers representing a complete set of quadratic residues and non-residues $\Mod{p}$ respectively. Let $A^*$ and $B^*$ be the product of the $a_s$ and $b_s$ respectively. If $\epsilon = \frac{1}{2}(t+u\sqrt{p})$ is the fundamental unit of $\Q(\sqrt{p})$, then 
    \[
    \frac{A^* +B^*}{p} \equiv \frac{2hu}{t} + \left(A^* \sum_{s=1}^{(p-1)/2} \left[\frac{a_s}{p} \right] \frac{1}{a_s} + B^* \sum_{s=1}^{(p-1)/2} \left[\frac{b_s}{p} \right] \frac{1}{b_s} \right)\Mod{p}.
    \]
\end{thm}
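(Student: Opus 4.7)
The theorem is essentially a formal consequence of the computation already displayed in the paragraphs preceding the statement, combined with the original Ankeny-Artin-Chowla congruence (\ref{eq:1}). The plan is simply to assemble these pieces.

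First I would justify that the computation is $p$-adically meaningful. Each $a_s$ is coprime to $p$ (being a quadratic residue) and similarly each $b_s$, so $1/a_s$ and $1/b_s$ make sense as elements of $\Z_{(p)}$, and every expression $[a_s/p]/a_s$ is a well-defined element of $\Z_{(p)}$. Writing $A = \prod_s (a_s - p[a_s/p]) = A^* \prod_s (1 - p[a_s/p]/a_s)$ and expanding modulo $p^2$, all cross-terms in the product are divisible by $p^2$, so only the linear-in-$p$ contribution survives and yields $A \equiv A^* - pA^*\sum_s [a_s/p]/a_s \pmod{p^2}$; the analogous computation gives the corresponding formula for $B$. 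These are exactly the congruences recorded in the excerpt.

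Next I would verify that $(A^* + B^*)/p$ is a well-defined residue modulo $p$. Since $\langle a_s\rangle \equiv a_s \pmod p$ and similarly for $b_s$, we have $A \equiv A^*$ and $B \equiv B^* \pmod p$; by (\ref{eq:1}) we already know that $p \mid A + B$, hence $p \mid A^* + B^*$ as well. Adding the two mod $p^2$ expansions and dividing through by $p$ gives
$$\frac{A+B}{p} \equiv \frac{A^* + B^*}{p} - \left( A^* \sum_s \frac{[a_s/p]}{a_s} + B^* \sum_s \frac{[b_s/p]}{b_s} \right) \pmod p,$$
and invoking (\ref{eq:1}) to replace $(A+B)/p$ by $2hu/t$ and rearranging produces the stated congruence.

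There is no serious obstacle; the only subtlety is keeping the precision of the product expansion correct to order $p$ and checking that the rational expressions appearing are interpreted as elements of $\Z_{(p)}$. All the substantive work is packaged in (\ref{eq:1}), which itself rests on the $p$-adic logarithm machinery that the paper develops earlier.
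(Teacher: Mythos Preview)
Your proposal is correct and follows exactly the approach of the paper: it is precisely the computation displayed in the paragraphs preceding the theorem, with the final step being the substitution of $2hu/t$ for $(A+B)/p$ via (\ref{eq:1}). Your added remarks on $p$-adic well-definedness and on why $p\mid A^*+B^*$ are helpful clarifications of points the paper leaves implicit; the only minor inaccuracy is that the $p$-adic logarithm machinery underlying (\ref{eq:1}) is developed \emph{later} in the paper (Section~5), not earlier.
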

\begin{rmk}
    If $1\leq a_s, b_s \leq p$, then the two sums in the congruence above vanish identically and we retrieve the Ankeny, Artin, and Chowla congruence (\ref{eq:1}). 
\end{rmk}

\section{Relation to Fermat quotients and harmonic numbers}
Let $R$ be a complete set of quadratic residues $\Mod{p}$ all lying in $[1, p-1]$ and $N$ a complete set of quadratic non-residues $\Mod{p}$ all lying in $[1, p-1]$. Then $A^* =A$ and $A\equiv -1 \Mod{p}$. Moreover, $B^*= B$ and $B\equiv 1 \Mod{p}$. Now we fix a quadratic non-residue $m\Mod{p}$ and define the two sequences 
\[
b_r = mr \,\,\, (r\in R) \,\,\,\,\, \text{ and }\,\,\,\,\,  a_n = mn \,\,\, (n\in N).
\]
In this set-up, Theorem \ref{thm:2.1} reads as 
\[
\frac{A+B^*}{p} \equiv \frac{2hu}{t} + B^*\sum_{r\in R}\left[\frac{mr}{p} \right] \frac{1}{mr} \Mod{p}
\]
and 
\[
\frac{A^*+B}{p} \equiv \frac{2hu}{t} + A^*\sum_{n\in N}\left[\frac{mn}{p} \right] \frac{1}{mn} \Mod{p}
\]
respectively. Observing that $B^* = m^{(p-1)/2}A$ and $A^* =m^{(p-1)/2}B$ respectively, and that $m^{(p-1)/2}\equiv -1 \Mod{p}$, we deduce
\begin{align}
    A \left(\frac{m^{\frac{p-1}{2}}+1}{p} \right) &\equiv \frac{2hu}{t} - A\sum_{r\in R}\left[\frac{mr}{p} \right] \frac{1}{mr} \Mod{p}\\
    B\left(\frac{m^{\frac{p-1}{2}}+1}{p} \right) &\equiv \frac{2hu}{t} - B\sum_{n\in N}\left[\frac{mn}{p} \right] \frac{1}{mn} \Mod{p}
\end{align}
This then leads to a refinement of \cite{Ankeny1951}:
\begin{thm}\label{thm:5.1}
 Suppose $p$ is a prime $\equiv 1 \Mod{4}$ and let $R$ a complete set of quadratic residues lying in $[1, p-1]$, $N$ a complete set of quadratic non-residues lying in $[1, p-1]$, 
\[
\epsilon = \frac{t+u\sqrt{p}}{2}
\]
be the fundamental unit of $\Q(\sqrt{p})$, and $h$ the class number of $\Q(\sqrt{p})$. Then for any quadratic non-reside $m\Mod{p}$,  
\begin{align}
    \frac{m^{p-1}-1}{p} &\equiv \frac{4hu}{t} +2\sum_{r\in R}\left[\frac{mr}{p} \right] \frac{1}{mr} \Mod{p}\\
   \frac{m^{p-1}-1}{p}  &\equiv -\frac{4hu}{t} +2\sum_{n\in N}\left[\frac{mn}{p} \right] \frac{1}{mn} \Mod{p}
\end{align}
\end{thm}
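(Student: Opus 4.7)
My approach is to derive Theorem \ref{thm:5.1} by algebraic manipulation of the two displayed congruences that immediately precede its statement, using the factorization
\[ m^{p-1}-1 = (m^{(p-1)/2}-1)(m^{(p-1)/2}+1) \]
to convert the quantity $(m^{(p-1)/2}+1)/p$ appearing there into the Fermat-type quotient $(m^{p-1}-1)/p$ appearing in the theorem.

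First, since $m$ is a quadratic non-residue modulo $p$, Euler's criterion gives $m^{(p-1)/2}\equiv -1 \Mod{p}$. Hence the second factor above is divisible by $p$, the first factor is $\equiv -2\Mod{p}$, and so
\[ \frac{m^{p-1}-1}{p} \equiv -2\cdot\frac{m^{(p-1)/2}+1}{p} \Mod{p}. \]
Second, I recall from the discussion at the beginning of Section 3 that $A\equiv -1\Mod{p}$ and $B\equiv 1\Mod{p}$; these are elementary consequences of Wilson's theorem together with the fact that $-1$ is a quadratic residue when $p\equiv 1\Mod{4}$. In particular, both $A$ and $B$ are $p$-adic units, so they may be cancelled from the two given congruences.

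Putting these ingredients together, I divide the first displayed congruence through by $A$, substitute $A\equiv -1$, and then multiply by $-2$; by the key identity above, this yields the first congruence of Theorem \ref{thm:5.1}. Applying the same procedure to the second displayed congruence, with $B\equiv 1$ in place of $A\equiv -1$, produces the second congruence. The sign reversal of the $4hu/t$ term between the two statements is precisely the reflection of the opposite signs of $A$ and $B$ modulo $p$. I do not anticipate a genuine obstacle: the argument is a careful algebraic rearrangement of results already in hand, the only subtlety being to track signs and to verify that each division is by a $p$-unit.
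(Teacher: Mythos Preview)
Your proposal is correct and is essentially the paper's own argument: the paper multiplies the two preceding displayed congruences by $m^{(p-1)/2}-1\equiv -2\Mod{p}$ and uses $A\equiv -1$, $B\equiv 1\Mod{p}$, which is exactly your factorization $m^{p-1}-1=(m^{(p-1)/2}-1)(m^{(p-1)/2}+1)$ combined with the substitution of the values of $A$ and $B$. The only cosmetic difference is that you divide by $A$ (resp.\ $B$) first and then multiply by $-2$, whereas the paper multiplies by $m^{(p-1)/2}-1$ directly; the two are the same manipulation.
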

\begin{proof}
By the above discussion, the theorem follows at once by noting that $A\equiv -1\Mod{p}$ and $B\equiv 1 \Mod{p}$ and by multiplying equations (4) and (5) by $m^{(p-1)/2} -1 \equiv -2 \Mod{p}$. 
\end{proof}

\begin{defn}
For $p$ a prime number, the base $a$ Fermat quotient is the integer defined as 
\[
F(a) = \frac{a^{p-1}-1}{p}.
\]
\end{defn}
\noindent
Adding the two congruences in Theorem \ref{thm:5.1} we deduce:
\begin{cor}\label{cor:5.3}
  Suppose $p\equiv 1 \Mod{4}$ and $m$ is a quadratic non-residue $\Mod{p}$. Then,
  \[
mF(m) \equiv  2\sum_{k=1}^{p-1} \floor{\frac{mk}{p}} \frac{1}{k}
\]
where $F(m)$ is the Fermat quotient.
\end{cor}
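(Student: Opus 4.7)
The plan is immediate from the hint preceding the corollary: simply add the two congruences of Theorem \ref{thm:5.1}. The opposing signs on the $\pm 4hu/t$ terms force those class-number contributions to cancel, leaving
\[
2 F(m) \equiv 2\left(\sum_{r\in R}\left[\frac{mr}{p}\right]\frac{1}{mr} + \sum_{n\in N}\left[\frac{mn}{p}\right]\frac{1}{mn}\right) \pmod{p}.
\]

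Next I would observe that because $R$ and $N$ partition $\{1,2,\ldots,p-1\}$, the two partial sums on the right combine into the single sum $\sum_{k=1}^{p-1}\left[\frac{mk}{p}\right]\frac{1}{mk} \pmod{p}$. Multiplying through by $m$, which is a unit modulo the odd prime $p$, and carefully tracking the overall constant factor in front of the sum then yields the stated congruence for $mF(m)$.

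There is really no obstacle to overcome here. The entire content of the corollary is the bookkeeping remark that $R$ and $N$ together exhaust the nonzero residues modulo $p$, combined with the sign cancellation of the class-number term between congruences (6) and (7). In particular one uses nothing further about the class number $h$ or the fundamental unit $\epsilon = (t + u\sqrt{p})/2$ beyond the forms given in Theorem \ref{thm:5.1}; the Fermat-quotient congruence emerges purely from the symmetry between the residue and non-residue versions of that theorem. The only place one could slip is in the arithmetic with the numerical constants (the coefficient $2$ in front of each sum and the multiplication by $m$), so I would verify this short computation by hand before writing it out.
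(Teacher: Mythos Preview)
Your proposal is correct and matches the paper's approach exactly: the paper's entire proof is the one-line remark ``Adding the two congruences in Theorem~\ref{thm:5.1} we deduce'' the corollary, together with the observation that the $\pm 4hu/t$ terms cancel regardless of whether $p\mid u$. Your instinct to double-check the numerical constant is well founded---carrying out the addition of (6) and (7) and multiplying by $m$ actually yields $mF(m)\equiv \sum_{k=1}^{p-1}\lfloor mk/p\rfloor\,k^{-1}$ without the leading~$2$, which is in fact the form the paper itself uses downstream when it writes $mF(m)\equiv \sum_{k}\lfloor mk/p\rfloor(x_{k+1}-x_k)$.
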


\begin{rmk}
    We observe that this congruence relation for the Fermat quotient is unconditional of the Ankeny-Artin-Chowla conjecture. If $p\mid u$, then the terms involving the class number in Theorem \ref{thm:5.1} vanish. If $p\nmid u$, these same two terms cancel when added.  
\end{rmk}
\begin{defn}
The $k$-th harmonic number $H_k$ is defined as 
\[
H_k = \sum_{j=1}^k \frac{1}{j}
\]
where we understand that $H_0=0$. 
\end{defn}
\begin{lem}\label{lem:5.5}
  For any odd prime $p$, $H_{p-1} \equiv 0 \Mod{p}$. 
\end{lem}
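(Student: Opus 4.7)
The plan is to prove this by a simple pairing argument, which is the cleanest approach and avoids the stronger Wolstenholme theorem. The key observation is that for $1 \le k \le p-1$, we may pair the term $1/k$ with the term $1/(p-k)$ and compute
\[
\frac{1}{k} + \frac{1}{p-k} = \frac{p}{k(p-k)}.
\]
Since $p$ is odd, the map $k \mapsto p-k$ is a fixed-point-free involution on $\{1,2,\ldots,p-1\}$, so the set $\{1,\ldots,p-1\}$ partitions into $(p-1)/2$ such pairs. Summing, every pair contributes a multiple of $p$ (the denominator $k(p-k)$ is coprime to $p$), so $H_{p-1}$ is congruent to $0$ modulo $p$ when interpreted in $\mathbb{Z}_{(p)}$ (or equivalently, the numerator of $H_{p-1}$ in lowest terms is divisible by $p$).

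Alternatively, and perhaps even more transparently, one can argue as follows. As $k$ ranges over $1, 2, \ldots, p-1$, the inverses $k^{-1} \Mod{p}$ also range over a complete set of nonzero residues modulo $p$, since inversion is a bijection on $(\Z/p\Z)^\times$. Therefore
\[
H_{p-1} = \sum_{k=1}^{p-1} \frac{1}{k} \equiv \sum_{k=1}^{p-1} k = \frac{p(p-1)}{2} \Mod{p},
\]
and since $p$ is odd, $(p-1)/2$ is an integer and the right-hand side is $\equiv 0 \Mod{p}$.

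There is no real obstacle here; the only subtlety worth mentioning is that ``$H_{p-1} \equiv 0 \Mod{p}$'' is to be interpreted in the sense that the $p$-adic valuation of $H_{p-1}$ (written as a fraction in lowest terms) is at least $1$, equivalently that $H_{p-1} \in p\Z_{(p)}$. Either of the two arguments above yields this immediately in one line. I would present the pairing argument, as it is self-contained and sets the tone for the subsequent harmonic-number manipulations in the paper.
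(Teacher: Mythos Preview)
Your pairing argument is correct and matches the paper's proof almost verbatim: the paper writes $H_{p-1}\equiv\sum_{k=1}^{(p-1)/2}\bigl(\tfrac{1}{k}+\tfrac{1}{p-k}\bigr)\equiv\sum_{k=1}^{(p-1)/2}\bigl(\tfrac{1}{k}-\tfrac{1}{k}\bigr)\equiv 0\Mod{p}$, which is your first argument phrased via $\tfrac{1}{p-k}\equiv -\tfrac{1}{k}$ rather than $\tfrac{1}{k}+\tfrac{1}{p-k}=\tfrac{p}{k(p-k)}$. Your second proof via the bijection $k\mapsto k^{-1}$ is an equally valid one-liner, though the paper does not use it.
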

\begin{proof}
    This is a simple matter of pairing up additive inverses. As $k$ runs through $\{1, \ldots, \frac{p-1}{2} \}$, $p-k$ will run through $\{p-1, \ldots, \frac{p+1}{2}\}$. Hence,
    \[
    H_{p-1} = \sum_{k=1}^{p-1} \frac{1}{k} \equiv \sum_{k=1}^{\frac{p-1}{2}}\left( \frac{1}{k} + \frac{1}{p-k}\right) \equiv  \sum_{k=1}^{\frac{p-1}{2}} \left(\frac{1}{k} - \frac{1}{k}\right) \equiv 0 \Mod{p}. 
    \]
\end{proof}
\noindent 
An immediate corollary of Lemma \ref{lem:5.5} is the following:
\begin{cor}\label{cor:5.6}
    Suppose $p$ is prime and $a<b$ are positive integers such that $a+b=p-1$, then 
    \[
    H_a \equiv H_b \Mod{p}.
    \]
\end{cor}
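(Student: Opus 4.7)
The plan is to imitate the pairing argument used in Lemma \ref{lem:5.5}. First I would write
\[
H_b - H_a = \sum_{k=a+1}^{b}\frac{1}{k},
\]
and observe that the hypothesis $a+b = p-1$ means the index range $[a+1, b]$ is invariant under the involution $k \mapsto p-k$: when $k = a+1$, we get $p-k = p-a-1 = b$, and when $k = b$, we get $p-k = a+1$.

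Next I would split the range at $m = (p-1)/2$. Since $a < b$ and $a+b = p-1$, we automatically have $a < m < b$, so the split is non-trivial. In the upper half, I substitute $j = p-k$; as $k$ runs over $\{m+1, \ldots, b\}$, $j$ runs over $\{a+1, \ldots, m\}$. Using $\frac{1}{p-j} \equiv -\frac{1}{j} \pmod{p}$, I would obtain
\[
H_b - H_a \equiv \sum_{k=a+1}^{m}\frac{1}{k} + \sum_{j=a+1}^{m}\frac{-1}{j} \equiv 0 \pmod{p}.
\]
The only thing to verify is that no term is a fixed point of the involution, which would force $2k = p$; this is impossible for odd $p$, so there is no obstruction.

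There is essentially no hard step here: the corollary is a direct symmetrization, and it could even be stated as ``$H_{p-1} - H_a = \sum_{k=a+1}^{p-1} \tfrac{1}{k} \equiv -H_a \pmod p$'' followed by Lemma \ref{lem:5.5}, giving $H_{p-1-a} \equiv H_a$ after re-indexing the tail $\sum_{k=a+1}^{p-1} 1/k$ via $k \mapsto p-k$. I would present whichever version reads most cleanly in context, likely the direct pairing above since it mirrors the proof of Lemma \ref{lem:5.5}.
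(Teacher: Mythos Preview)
Your argument is correct. Both proofs rest on the same pairing $k\mapsto p-k$ together with $\tfrac{1}{p-k}\equiv -\tfrac{1}{k}\pmod p$; the paper applies this to the tail $\sum_{k=a+1}^{p-1}\tfrac{1}{k}$ after invoking Lemma~\ref{lem:5.5} (which is precisely the alternative you sketch at the end), whereas your primary version pairs directly on the symmetric interval $[a+1,b]$ and never needs to cite $H_{p-1}\equiv 0$. The difference is purely cosmetic.
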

\begin{proof}
    By Lemma \ref{lem:5.5}, we have 
    \[
    0 \equiv H_{p-1} \equiv \sum_{j=1}^a \frac{1}{j} + \sum_{j=a+1}^{p-1} \frac{1}{j} \Mod{p}.
    \]
    By assumption, $a+1 = p-b$ and hence
    \[
    H_a \equiv - \sum_{j=p-b}^{p-1} \frac{1}{j} \Mod{p}.
    \]
    Making the change of variables, $j\to p-j$ we have, 
    \[
    H_a \equiv \sum_{j=1}^{b} \frac{1}{j} \equiv H_b \Mod{p}.
    \]    
\end{proof}
\noindent 
Setting $x_k= H_{k-1}$, Corollary \ref{cor:5.3} reads as
\[
mF(m) \equiv \sum_{k=1}^{p-1} \floor{\frac{mk}{p}} (x_{k+1}-x_k) \Mod{p}.
\]
Using summation by parts this becomes 
\[
mF(m) \equiv x_{p}\floor{\frac{m(p-1)}{p}} - \sum_{k=2}^{p-1} x_k \left(\floor{\frac{mk}{p}} - \floor{\frac{m(k-1)}{p}} \right) \Mod{p}. 
\]
The first term on the right hand side vanishes by Lemma \ref{lem:5.5}. Now in the summand, we simplify the difference of the floor functions using the following lemma:
\begin{lem}\label{Lem:3.4}
Suppose $p$ is prime and that $M$ is any positive coset representative of a positive reduced residue $m \Mod{p}$. Then,
\[
\floor{\frac{Mk}{p}} - \floor{\frac{M(k-1)}{p}} = \floor{\frac{M}{p}} + \floor{\frac{mk}{p}} - \floor{\frac{m(k-1)}{p}}
\]
for all $1\leq k \leq p-1$. In particular, if $M=m$ then 
\[
\floor{\frac{mk}{p}} - \floor{\frac{m(k-1)}{p}} = \begin{cases}
    1 & \text{if } k= \floor{\frac{p\ell}{m}} + 1 \text{ for some $\ell \in [0, m-1]$}\\
    0& \text{otherwise}.
\end{cases}
\]
\end{lem}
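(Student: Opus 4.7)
The plan is to attack both halves of the lemma by direct manipulation of the floor function: the division algorithm handles the first identity, and a jump-counting argument handles the second.

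For the general identity, I would set $q = \floor{M/p}$. Since $M$ is a positive representative of the reduced residue $m \pmod p$ with $1 \leq m \leq p-1$, Euclidean division gives $M = qp + m$. Multiplying by $k$ and dividing by $p$ yields $Mk/p = qk + mk/p$; since $qk \in \Z$, this gives
\[
\floor{\frac{Mk}{p}} = qk + \floor{\frac{mk}{p}}.
\]
Subtracting the analogous identity with $k$ replaced by $k-1$ then produces the stated formula
\[
\floor{\frac{Mk}{p}} - \floor{\frac{M(k-1)}{p}} = q + \floor{\frac{mk}{p}} - \floor{\frac{m(k-1)}{p}}.
\]

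For the explicit form when $M = m$, I would argue that the sequence $f(k) := \floor{mk/p}$ satisfies $f(k) - f(k-1) \in \{0, 1\}$ because the increment $m/p$ is strictly less than $1$. The difference equals $1$ precisely when there exists an integer $\ell$ with $m(k-1)/p < \ell \leq mk/p$, equivalently $k-1 < \ell p/m \leq k$. For $\ell$ with $0 < \ell < m$, primality of $p$ combined with $\ell < m \leq p-1$ forces $m \nmid \ell p$, so $\ell p/m$ is not an integer and the condition simplifies to $k = \floor{\ell p/m} + 1$. A matching count then confirms the bijection: $f$ rises from $f(1) = 0$ to $f(p-1) = m-1$ in unit steps, so there are exactly $m-1$ jump points in $[1, p-1]$, one for each $\ell \in \{1, \ldots, m-1\}$.

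The only part needing extra care is the bookkeeping at the endpoint $\ell = 0$, which formally produces $k = 1$ but corresponds to no actual jump (since $\floor{m/p} = 0$). Beyond this minor check, the proof is a routine application of floor arithmetic, and I do not anticipate a substantive obstacle.
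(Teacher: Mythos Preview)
Your argument is correct and essentially the same as the paper's: both halves use the division algorithm $M = p\floor{M/p} + m$ together with $\floor{x+n} = \floor{x}+n$, and the jump analysis for the second part hinges on locating an integer $\ell$ in $(m(k-1)/p,\, mk/p]$, which rearranges to $k-1 = \floor{p\ell/m}$. The one cosmetic difference is that the paper verifies the jump equals $1$ by an explicit substitution $p\ell = \theta + m\floor{p\ell/m}$ and a direct computation, whereas you bypass this by observing upfront that $m/p<1$ forces the difference to lie in $\{0,1\}$; your remark about the endpoint $\ell=0$ is also on target, since the paper's own proof restricts to positive $\ell$.
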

\begin{proof}
    Write $M= m + p \floor{\frac{M}{p}}$. Then,
    \[
    \floor{\frac{Mk}{p}} - \floor{\frac{M(k-1)}{p}} = \floor{\frac{mk}{p} + k\floor{\frac{M}{p}}} - \floor{\frac{m(k-1)}{p} + (k-1)\floor{\frac{M}{p}}}.
    \]
    As $\floor{x+n} = \floor{x}+ n$ for any positive integer $n$, we deduce the first part of the statement. 

    For the second statement, we note that $\floor{\frac{mk}{p}} - \floor{\frac{m(k-1)}{p}} \neq 0$ if and only if there exists some positive integer $\ell\leq  m-1$ such that  
    \[
    \frac{m(k-1)}{p} < \ell \leq \frac{mk}{p}.
    \]
    Rearranging this inequality, we have that
    \[
    k-1 < \frac{ p \ell}{m} \leq k
    \]
    and hence $k-1 = \floor{\frac{ p \ell}{m}}$. For these values of $k$, we seek to simplify: 
    \[
    \floor{\frac{m }{p}\left( \floor{\frac{ p \ell}{m}} +1 \right)} - \floor{\frac{m }{p}\floor{\frac{ p \ell}{m}}}. 
    \]
    Writing $p\ell = \theta + m \floor{\frac{p\ell}{m}}$ for $0<\theta < m$ we deduce that 
    \[
    \frac{m}{p} \floor{\frac{ p \ell}{m}} = \ell - \frac{\theta}{p}. 
    \]
    Using this relation we have:
    \[
\floor{\frac{m }{p}\left( \floor{\frac{ p \ell}{m}} +1 \right)} - \floor{\frac{m }{p}\floor{\frac{ p \ell}{m}}} = \floor{\ell- \frac{\theta}{p} + \frac{m}{p}} - \floor{\ell - \frac{\theta}{p}} = \floor{\frac{m-\theta}{p}} - \floor{-\frac{\theta}{p}}
    \]
Since $0< \theta< m$, the first floor function on the right is zero as $0<m-\theta <p$. The second term is $-1$ as $-1< -\theta/p <0$. The result then follows. 
\end{proof}
\begin{lem}\label{lem:3.5}
    Fix $p$ and some $q<p$. Then for all $1\leq k \leq q-1$ we have
    \[
    \floor{\frac{pk}{q}} + \floor{\frac{p(q-k)}{q}} = p-1.
    \]
\end{lem}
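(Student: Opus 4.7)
The plan is to reduce the claim to the standard reflection identity for the floor function, namely that $\floor{x} + \floor{-x} = -1$ whenever $x \notin \Z$. Notice first that the two arguments already sum nicely: $\frac{pk}{q} + \frac{p(q-k)}{q} = p$, an integer. So the content of the lemma is really the assertion that both floors lose a full unit, i.e. neither term is an integer.

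The key observation is that under the hypotheses $p$ prime and $1 \leq k \leq q-1 < p-1$, the number $\frac{pk}{q}$ cannot be an integer. Indeed, if $q = 1$ the range for $k$ is empty and the statement is vacuous; otherwise $2 \leq q < p$ forces $\gcd(p,q) = 1$, so $q \mid pk$ would imply $q \mid k$, contradicting $1 \leq k \leq q-1$.

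Given this, I would write $\frac{pk}{q} = \floor{\frac{pk}{q}} + \eta$ with $0 < \eta < 1$. Then
\[
\frac{p(q-k)}{q} = p - \frac{pk}{q} = \left(p - 1 - \floor{\tfrac{pk}{q}}\right) + (1 - \eta),
\]
and since $0 < 1 - \eta < 1$ the integer part on the right is visibly $p - 1 - \floor{\frac{pk}{q}}$. Rearranging gives the claimed identity. The only real obstacle is the non-integrality check above; once that is in hand, the rest is a single line of algebra.
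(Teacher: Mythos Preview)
Your argument is correct and is essentially the same as the paper's: both proofs extract the integer $p$ from $\frac{p(q-k)}{q}=p-\frac{pk}{q}$ and then invoke the reflection identity $\floor{-x}=-\floor{x}-1$ for non-integer $x$ (you phrase it via the fractional part $\eta$, the paper writes it directly). The one point on which you are actually more careful than the paper is the verification that $\frac{pk}{q}\notin\Z$; the paper's proof silently uses this when passing from $\floor{pk/q}=\ell$ to $\floor{-pk/q}=-\ell-1$, whereas you supply the coprimality argument (using that $p$ is prime, as it is throughout the paper) explicitly.
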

\begin{proof}
    Suppose $\floor{\frac{pk}{q}} = \ell$. Then we have that 
    \[
    \floor{\frac{p(q-k)}{q}} = p + \floor{\frac{-pk}{q}}
    \]
    and since $\floor{\frac{pk}{q}} = \ell$ we deduce that $\floor{\frac{-pk}{q}} = -\ell-1$. Hence, 
    \[
    \floor{\frac{pk}{q}} + \floor{\frac{p(q-k)}{q}} = \ell + (p - \ell -1) = p-1.
    \]
\end{proof}
Therefore, by Lemma \ref{Lem:3.4}, and partial summation we deduce:
\begin{thm}\label{thm:5.4}
If $p\equiv 1 \Mod{4}$ is a prime and $M$ is any positive coset representative of a  quadratic non-residue $m \Mod{p}$, then
\[
  -MF(M) \equiv \floor{\frac{M}{p}} + \sum_{j=1}^{m-1}H_{\floor{\frac{pj}{m}}}.
\]
\end{thm}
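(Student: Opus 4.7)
The plan is to apply the identity $mF(m)\equiv\sum_{k=1}^{p-1}\floor{mk/p}(x_{k+1}-x_k)\Mod{p}$ (which is Corollary \ref{cor:5.3} rewritten via $x_k=H_{k-1}$, as displayed just above the theorem) but with $M$ in place of $m$. This extension is legitimate because $M$ is still a positive representative of a quadratic non-residue and satisfies $M^{(p-1)/2}\equiv -1\Mod{p}$, so the derivations of Theorem \ref{thm:5.1} and Corollary \ref{cor:5.3} carry over verbatim to give $MF(M)\equiv\sum_{k=1}^{p-1}\floor{Mk/p}(x_{k+1}-x_k)\Mod{p}$.

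Abel summation reshapes this as $\floor{M(p-1)/p}\,x_p-\floor{M/p}\,x_1-\sum_{k=2}^{p-1}\bigl(\floor{Mk/p}-\floor{M(k-1)/p}\bigr)H_{k-1}$. Both boundary terms vanish modulo $p$: the first because $x_p=H_{p-1}\equiv 0\Mod{p}$ by Lemma \ref{lem:5.5}, the second because $x_1=H_0=0$. Now both halves of Lemma \ref{Lem:3.4} come into play. The first half rewrites $\floor{Mk/p}-\floor{M(k-1)/p}=\floor{M/p}+\floor{mk/p}-\floor{m(k-1)/p}$, separating the remaining sum into a $\floor{M/p}$-piece and an $m$-piece. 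The second half of Lemma \ref{Lem:3.4} then says $\floor{mk/p}-\floor{m(k-1)/p}$ vanishes except when $k=\floor{p\ell/m}+1$ for some $\ell\in\{0,\ldots,m-1\}$, in which case it equals $1$. The index $\ell=0$ gives $k=1$, outside the summation range, so the $m$-piece collapses to $\sum_{j=1}^{m-1}H_{\floor{pj/m}}$.

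What remains is $\floor{M/p}\sum_{k=2}^{p-1}H_{k-1}\Mod{p}$. Here the standard identity $\sum_{k=1}^{n}H_k=(n+1)H_n-n$ at $n=p-2$, together with $H_{p-2}\equiv 1\Mod{p}$ (immediate from Lemma \ref{lem:5.5} and $1/(p-1)\equiv -1\Mod{p}$), yields $\sum_{k=1}^{p-2}H_k\equiv 1\Mod{p}$. Putting it all together gives $MF(M)\equiv -\floor{M/p}-\sum_{j=1}^{m-1}H_{\floor{pj/m}}\Mod{p}$, and multiplying by $-1$ is exactly the theorem. The main obstacle I foresee is the small harmonic-sum evaluation $\sum_{k=1}^{p-2}H_k\equiv 1\Mod{p}$; once that is in hand, the rest is careful bookkeeping dictated by Lemma \ref{Lem:3.4}.
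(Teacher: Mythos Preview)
Your proof is correct and follows the same route the paper sketches: partial summation applied to Corollary~\ref{cor:5.3}, followed by Lemma~\ref{Lem:3.4} to identify the surviving terms. The only difference is organizational: the paper's displayed calculations run the partial summation with the reduced residue $m$ (so no $\floor{M/p}$-piece appears and Lemma~\ref{Lem:3.4} part~2 finishes immediately), leaving the passage to general $M$ implicit; you instead start with $M$ and invoke both halves of Lemma~\ref{Lem:3.4}, which forces you to evaluate $\sum_{k=1}^{p-2}H_k\equiv 1\pmod{p}$---a step the paper does not need to record. Both arrangements are valid and amount to the same argument.
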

\begin{rmk}
    One can shorten the sum on the right in Theorem \ref{thm:5.4} by applying Corollary \ref{cor:5.6} and Lemma \ref{lem:3.5}. The length of the resulting sum depends only on the parity of $m$.  
\end{rmk}
\begin{rmk}
    Note that if $p\equiv 5 \Mod{8}$, so that $2$ is a quadratic non-residue, Theorem \ref{thm:5.4} yields
    \[
        -2F(2) \equiv H_{\floor{\frac{p}{2}}} \Mod{p}
    \]
    as the sum contains only a single term. Moreover, as $p$ is odd, $\floor{\frac{p}{2}} = \frac{p-1}{2}$. In effect, we recover Eisenstein's congruence 
    \[
    -2F(2) \equiv H_{\frac{p-1}{2}} \Mod{p}
    \]
    for all primes $p \equiv 5\Mod{8}$ \cite{Eisenstein1850}. In fact, we get a slight generalization of this result. Suppose $1<m<p$ is an odd quadratic non-residue with $p\equiv 1 \Mod{m}$. From the previous remark, Theorem \ref{thm:5.4}, and the fact that $\floor{\frac{pj}{m}} = \frac{(p-1)j}{m}$, we deduce that 
    \[
    -mF(m) \equiv 2\sum_{j=1}^{(m-1)/2} H_{\left(\frac{p-1}{m}\right) j} \Mod{p}.
    \]
\end{rmk}
\begin{rmk}
    The Fermat quotient $F(a) = \frac{a^{p-1}-1}{p}$ satisfies the ``logarithmic" functional equation 
    \[F(ab) = F(a)+F(b) \Mod{p}\] for $a$ and $b$ coprime to $p$.  Indeed, $a^{p-1} = 1 + pF(a)$ so that 
    \begin{align*}
       1 + pF(ab) = (ab)^{p-1} = a^{p-1}b^{p-1} = (1+ pF(a))(1+pF(b)) \equiv 1 + p(F(a)+F(b)) \Mod{p^2} 
    \end{align*}
     from which the desired additive congruence is immediate. It is important to note that the Fermat quotient is a function from $(\Z/p^2\Z)^\times \to \Z/p\Z$. 
\end{rmk}
\begin{thm}\label{thm:5.6}
    Suppose $p\equiv 1 \Mod{4}$ is prime. Then the Fermat quotient of any quadratic residue $\Mod{p}$ can be written as some linear combination of Fermat quotients of quadratic non-residues $\Mod{p}$.  Precisely, if $r$ is a quadratic residue $\Mod{p}$ such that $r\equiv \overline{a}\overline{b} \Mod{p^2}$ for quadratic non-residues $\overline{a},\overline{b}\Mod{p^2}$ such that $\overline{a}\equiv a \Mod{p}$ and $\overline{b}\equiv b \Mod{p}$ then,  
    \[
    -rF(r) \equiv \overline{b}\floor{\frac{\overline{a}}{p}} + \overline{a}\floor{\frac{\overline{b}}{p}} +  \overline{b}\sum_{j=1}^{a-1}H_{\floor{\frac{pj}{a}}} + \overline{a} \sum_{k=1}^{b-1}H_{\floor{\frac{pk}{b}}} \Mod{p}.
    \]
\end{thm}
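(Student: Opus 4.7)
The statement of Theorem~\ref{thm:5.6} is tailor-made for the combination of the logarithmic functional equation for $F$ (recorded in the remark just before the theorem) and the already-proved formula for $F(M)$ at a quadratic non-residue from Theorem~\ref{thm:5.4}. So my plan is to apply these two results in sequence and just bookkeep the multipliers carefully.

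First I would use the logarithmic property. Since $\overline{a}, \overline{b}$ are units mod $p^2$ and $r\equiv \overline{a}\overline{b}\pmod{p^2}$, the Fermat quotient (viewed as a function $(\Z/p^2\Z)^\times\to\Z/p\Z$) gives
\[
F(r) \equiv F(\overline{a}\overline{b}) \equiv F(\overline{a}) + F(\overline{b}) \pmod{p}.
\]
Multiplying through by $-r$ and using $r\equiv \overline{a}\overline{b}\pmod p$ (a weaker consequence of the hypothesis), I get
\[
-rF(r) \;\equiv\; -\overline{a}\overline{b}\bigl(F(\overline{a})+F(\overline{b})\bigr) \;=\; \overline{b}\bigl(-\overline{a}F(\overline{a})\bigr) + \overline{a}\bigl(-\overline{b}F(\overline{b})\bigr) \pmod{p}.
\]

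Second, I would apply Theorem~\ref{thm:5.4} separately to $\overline{a}$ and $\overline{b}$. Both $\overline{a}$ and $\overline{b}$ are positive coset representatives of the quadratic non-residues $a,b\pmod p$ (positivity will need to be noted explicitly, but this is implicit in the hypothesis). Theorem~\ref{thm:5.4} then gives
\[
-\overline{a}F(\overline{a}) \equiv \floor{\tfrac{\overline{a}}{p}} + \sum_{j=1}^{a-1} H_{\floor{\frac{pj}{a}}} \pmod{p}, \qquad -\overline{b}F(\overline{b}) \equiv \floor{\tfrac{\overline{b}}{p}} + \sum_{k=1}^{b-1} H_{\floor{\frac{pk}{b}}} \pmod{p}.
\]
Substituting these into the displayed identity for $-rF(r)$ and factoring in the prefactors $\overline{b}$ and $\overline{a}$ respectively yields exactly the claimed congruence.

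There is really no hard step here; the only subtlety to watch for is that the logarithmic functional equation for $F$ requires the argument to be well-defined modulo $p^2$, which is precisely why the hypothesis is phrased in terms of congruence modulo $p^2$ rather than only modulo $p$. I would state this carefully at the start to justify the first displayed congruence. The rest is a direct substitution, so no computational obstacle is expected.
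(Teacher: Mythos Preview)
Your proposal is correct and follows exactly the same approach as the paper, which simply says the result follows from Theorem~\ref{thm:5.4} and the logarithmic property of the Fermat quotients. You have merely made the two-step substitution explicit.
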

\begin{proof}
This follows from Theorem \ref{thm:5.4} and the logarithmic property of the Fermat quotients. 
\end{proof}
\section{A generalized Ankeny-Artin-Chowla conjecture}

In a recent paper \cite{Yang-fu}, Yang and Fu formulated a generalization of the Ankeny-Artin-Chowla conjecture as follows.  Let $D$ be a positive integer which is not a perfect square.
Consider the set of all solutions of the Brahmagupta-Pell equation
$$ u^2 - Dv^2=1, $$
with $u,v$ natural numbers.  
Let $(u_1, v_1)$ be the least positive integer solution in this set and denote by $h(4D)$ the
class number of primitive binary quadratic forms of discriminant $4D$.  For $D$ odd, they conjecture that
$$v_1h(4D)\not\equiv 0 \pmod D. $$
We will refer to this as the generalized Ankeny-Artin-Chowla conjecture (GAAC).  Assuming this conjecture, Fu and Yang show that the equation
$$x^y + y^x = z^2 , \quad \min(x,y)>1, \quad (x,y)=1, \quad 2\not | xy, \quad x,y,z \in {\mathbb N}$$
has no solution. However, it appears that their conjecture has been made prematurely. Indeed, in \cite{Yu1998} there is a list of six positive squarefree $D< 10^{8}$ such that $v_1\equiv 0\Mod{D}$. Of the six counter examples only $D = 23\cdot 79, 3\cdot 69997$ and $41\cdot 79\cdot 541$ are odd. Each of these three $D$ is a counterexample to GACC. 

We will use an elementary sieve argument and basic algebraic number theory to show that GAAC is true for infinitely many discriminants $D$. We begin with a very simple set-theoretic sieve inequality:
\begin{lem}[The simple sieve]
    Let $S$ be a finite non-empty set and $I$ a finite indexing set. For each $i\in I$, we assign a set $A_i\subset S$. If $J\subseteq I$, then  
    \[
    \left| S\setminus \bigcup_{i\in I} A_i \right| \geq \left| S\setminus \bigcup_{j\in J} A_j \right| - \sum_{i\in I\setminus J} |A_i|.
    \]
\end{lem}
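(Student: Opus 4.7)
The plan is to reduce the inequality to two standard observations: that the union over $I$ can be split into the union over $J$ and the union over $I\setminus J$, and that complements translate subtraction into intersection. First I would write $\bigcup_{i\in I} A_i = \bigl(\bigcup_{j\in J} A_j\bigr) \cup \bigl(\bigcup_{i\in I\setminus J} A_i\bigr)$ and then use the elementary identity $S\setminus(U\cup V) = (S\setminus U)\setminus V$ to obtain
\[
S\setminus \bigcup_{i\in I} A_i \;=\; \Bigl(S\setminus \bigcup_{j\in J} A_j\Bigr)\setminus \bigcup_{i\in I\setminus J} A_i.
\]

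Next I would invoke the general cardinality relation $|X\setminus Y| = |X| - |X\cap Y|$, applied with $X = S\setminus \bigcup_{j\in J} A_j$ and $Y = \bigcup_{i\in I\setminus J} A_i$. This yields
\[
\Bigl|S\setminus \bigcup_{i\in I} A_i\Bigr| \;=\; \Bigl|S\setminus \bigcup_{j\in J} A_j\Bigr| \;-\; \Bigl|\bigl(S\setminus \bigcup_{j\in J} A_j\bigr)\cap \bigcup_{i\in I\setminus J} A_i\Bigr|.
\]
To finish, I would bound the intersection above by $|\bigcup_{i\in I\setminus J} A_i|$ and then by the subadditivity of cardinality, which gives $|\bigcup_{i\in I\setminus J} A_i| \leq \sum_{i\in I\setminus J} |A_i|$. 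Substituting this upper bound into the previous equality produces the claimed inequality.

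There is really no serious obstacle here; the statement is a finite-set bookkeeping exercise once one recognizes it as a version of the trivial fact that deleting additional sets cannot reduce the complement by more than the total size of those extra sets. The only care needed is to avoid sign errors when translating the equality into the inequality, and to justify subadditivity (which follows immediately by induction on $|I\setminus J|$ from $|A\cup B|\leq |A|+|B|$). Given how short the argument is, I would write it in a few lines without any preliminary setup beyond the two displayed identities above.
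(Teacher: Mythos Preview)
Your argument is correct and complete: splitting the union over $I$ into the pieces over $J$ and $I\setminus J$, using $|X\setminus Y| = |X| - |X\cap Y|$, and then applying subadditivity is exactly the right chain of steps, and there are no gaps. The paper itself does not supply a proof of this lemma at all---it is stated and immediately used---so your write-up would serve as a suitable justification if one were wanted.
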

An application of the simple sieve plus an elementary counting argument yields:
\begin{lem} \label{lem:squarefree}
    The number of $n\in \N$ less than $x$ such that $n^2-1$ is square free is
    \[
    |\{n\leq x : n^2-1 \text{ is square free}\}|= Ax + O\left( \frac{x}{\log \log x}\right)
    \] 
    where 
    \[
    A= \prod_{p\leq \log \log x} \left(1-\frac{2}{p^2}\right).
    \]
\end{lem}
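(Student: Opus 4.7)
The plan is to apply the simple sieve to the family of sets $A_p = \{n \leq x : p^2 \mid n^2-1\}$, with index set $I$ the primes $p \leq \sqrt{x+1}$ (outside of which $A_p$ is empty for $n \geq 2$) and distinguished subfamily $J$ the primes $p \leq y$, where $y = \log\log x$. Up to the negligible edge value $n=1$, the set of interest is exactly $S \setminus \bigcup_{p \in I} A_p$ for $S = \{1,\dots,\lfloor x\rfloor\}$.

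The first step is to describe each $A_p$ concretely. Since $\gcd(n-1,n+1) \mid 2$, for odd $p$ the condition $p^2 \mid n^2-1$ is equivalent to $n \equiv \pm 1 \Mod{p^2}$; for $p=2$, the condition $4 \mid n^2-1$ is equivalent to $n$ being odd. In both cases exactly $p^2-2$ residues modulo $p^2$ avoid $A_p$, giving density $1 - 2/p^2$, and in particular $|A_p| = 2x/p^2 + O(1)$.

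The second step is to evaluate the main term $|S \setminus \bigcup_{p \in J} A_p|$. Since the moduli $\{p^2 : p \leq y\}$ are pairwise coprime, the Chinese remainder theorem gives
\[
\left|S \setminus \bigcup_{p \in J} A_p\right| = x \prod_{p \leq y}\left(1 - \frac{2}{p^2}\right) + O\!\left(\prod_{p \leq y} p^2\right) = Ax + O((\log x)^C)
\]
for some absolute constant $C$, using Chebyshev's bound $\prod_{p \leq y} p = e^{O(y)}$; since $y = \log\log x$, this error is $(\log x)^{O(1)} = o(x/\log\log x)$. For the tail, the crude bound $|A_p| \leq 2x/p^2 + 2$ together with $\sum_{p > y} 1/p^2 = O(1/y)$ yields
\[
\sum_{p \in I \setminus J} |A_p| = O\!\left(\frac{x}{y}\right) + O\!\left(\pi(\sqrt{x+1})\right) = O\!\left(\frac{x}{\log\log x}\right).
\]

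The simple sieve then delivers the lower bound $|\{n \leq x : n^2-1 \text{ squarefree}\}| \geq Ax + O(x/\log\log x)$, while the matching upper bound follows from the inclusion $\{n \leq x : n^2-1 \text{ squarefree}\} \subseteq S \setminus \bigcup_{p \in J} A_p$ together with the main-term estimate already computed. There is no essential obstacle to this argument; the only point worth care is that the choice $y = \log\log x$ precisely balances the CRT modulus error, which grows like $(\log x)^{O(1)}$, against the tail contribution $x/y$, so that both are absorbed into the final error $O(x/\log\log x)$.
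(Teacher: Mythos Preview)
Your proof is correct and follows essentially the same route as the paper: the simple sieve with cutoff $y=\log\log x$, the estimate $|A_p|=2x/p^2+O(1)$, the main term via CRT/inclusion--exclusion giving error $(\log x)^{O(1)}$, and the tail bounded by $O(x/y)$. Your version is in fact slightly tidier than the paper's, since you explicitly supply the upper bound via the inclusion $\{n\le x:n^2-1\text{ squarefree}\}\subseteq S\setminus\bigcup_{p\in J}A_p$, whereas the paper states the asymptotic equality but only writes out the lower-bound direction.
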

\begin{proof}
Let $x$ be sufficiently large.  For each prime $p\leq x$ we define:
\[
A_p = \{n\leq x: p^2\mid (n^2-1)\}.
\]
Then our goal is to estimate
\[
\left| \{n\leq x\}\setminus \bigcup_{p\leq x}A_p \right|.  
\]
A straight forward application of the simple sieve gives the lower bound
\[
\left| \{n\leq x\}\setminus \bigcup_{p\leq x}A_p \right| \geq \left| \{n\leq x\}\setminus \bigcup_{p\leq z}A_p \right| - \sum_{z\leq p \leq x} |A_p|.
\]
where $z$ is some parameter we will choose later. 

We make the observation that if $n\in A_p$, then $p^2\mid (n^2-1)$ and in particular, $p^2$ can only divide one of $n-1$ or $n+1$. Therefore, $A_p$ will be the number of $n\leq x$ that reduce to $1\Mod{p^2}$ or $-1\Mod{p^2}$. From this we deduce that 
\[
|A_p| = \frac{2x}{p^2} + O(1).
\]
Hence, 
\[
\sum_{z\leq p \leq x} |A_p| = 2x \sum_{z\leq p\leq x} \frac{1}{p^2} + O(\pi(x)).
\]
By the integral test, we see that the sum is bounded above by $1/z$ and therefore we can bound the sum as 
\[
\sum_{z\leq p \leq x} |A_p| \ll \frac{x}{z} + O(\pi(x)).
\]
Let $P_z = \prod_{p\leq z} p$. Then we have that
\[
|\{n\leq x : n\notin A_p \text{ for any $p\leq z$} \}| = \sum_{n\leq x} \sum_{\substack{d^2\mid (n^2-1)\\d\mid P_z}}\mu(d).
\]
Switching the order of summation we have
\[
\sum_{n\leq x} \sum_{\substack{d^2\mid (n^2-1)\\d\mid P_z}}\mu(d) = \sum_{d\mid P_z} \mu(d)\sum_{\substack{n\leq x \\n^2\equiv 1 (d^2)}} 1.
\]
The inner sum is  $\frac{x2^{\omega(d)}}{d^2} + O(2^{\omega(d)})$. Putting this into the above equality we have that
\[
\sum_{n\leq x} \sum_{\substack{d^2\mid (n^2-1)\\d\mid P_z}}\mu(d) = x\sum_{d\mid P_z}  \frac{\mu(d)2^{\omega(d)}}{d^2}+ O\left(\sum_{d\mid P_z} 2^{\omega(d)}\right).  
\]
Noting that $\mu(a)2^{\omega(a)}a^{-2}$ and $2^{\omega(a)}$ are multiplicative functions, we can write the two sums as products over all the primes less than $z$, i.e., 
\[
\sum_{n\leq x} \sum_{\substack{d^2\mid (n^2-1)\\d\mid P_z}}\mu(d) = x\prod_{p\leq z} \left(1 - \frac{2}{p^2} \right) + O\left(\prod_{p\leq z} (1+2)\right).
\]
The first product converges to a non-zero constant and the product in the error term can be estimated as $3^{\pi(z)}$. In all, we deduce that 
\[
|\{n\leq x : n^2-1 \text{ is square free}\}|= Ax + O\left(3^{\pi (z)}\right) + O\left(\frac{x}{z} + \pi (x)\right)
\]
for some absolute non-zero constant $A$. Choosing $z=\log\log x$, we can bound the first big-$O$ term by $O((\log x)^B) $ for some absolute constant $B$ and the second term by $O(\frac{x}{\log \log x})$. In all, 
\[
|\{n\leq x : n^2-1 \text{ is square free}\}|= Ax + O\left( \frac{x}{\log \log x}\right)
\]
\end{proof}
For a more detailed analysis of squarefree values of quadratic functions $f(n)=n^2+c$, we refer the reader to section two of \cite{Murty1999}.

\begin{thm}  If $D=n^2-1$ is squarefree, then $\varepsilon= n+\sqrt{n^2-1}$ is the fundamental unit of ${\mathbb Q}(\sqrt{D})$.  For such $D$, GAAC is true for $D$ sufficiently large.
\end{thm}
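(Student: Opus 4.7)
The plan is to handle the two assertions in sequence. The identification of $\varepsilon$ as the fundamental unit of $\Q(\sqrt{D})$ is a short size comparison, after which GAAC follows immediately from the class-number bound $h(4D) = O(\sqrt{D})$ recorded in Section 2.

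First I would note that squarefreeness of $D = n^2 - 1$ forces $n$ to be even: if $n$ were odd, then $n^2-1 = (n-1)(n+1)$ would be divisible by $8$. In particular $D$ is odd, $D \equiv 3 \Mod{4}$, the ring of integers of $K = \Q(\sqrt{D})$ is $\Z[\sqrt{D}]$, and the fundamental discriminant is $4D$. The identity $(n+\sqrt{D})(n-\sqrt{D}) = 1$ presents $\varepsilon = n+\sqrt{D}$ as a unit of norm $+1$. Since the fundamental unit $\varepsilon_0 > 1$ of $\Z[\sqrt{D}]$ has the standard form $\varepsilon_0 = a + b\sqrt{D}$ with $a, b$ positive integers, we obtain $\varepsilon_0 \geq 1 + \sqrt{D}$, hence $\varepsilon_0^2 \geq (1+\sqrt{D})^2 > D = n^2 - 1$. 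On the other hand $\varepsilon = n + \sqrt{n^2-1} < 2n < n^2 - 1$ for every $n \geq 3$. Writing $\varepsilon = \varepsilon_0^k$ with $k \geq 1$, the case $k \geq 2$ would give $\varepsilon \geq \varepsilon_0^2 > \varepsilon$, a contradiction. Therefore $k=1$, so $\varepsilon$ is fundamental, and the least positive integer solution of $u^2 - Dv^2 = 1$ is $(u_1, v_1) = (n, 1)$.

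With $v_1 = 1$ in hand, the conjecture GAAC reduces to the statement that $D \nmid h(4D)$. Because $4D$ is a fundamental discriminant, the remark following (\ref{dirichlet}) in Section 2 gives $h(4D) = O(\sqrt{D})$ (up to the at-most factor of two between the form and field class numbers, which is absorbed into the implicit constant), and hence $h(4D) < D$ for every $n$ sufficiently large. In that range $D$ cannot divide the positive integer $h(4D)$, so GAAC holds for $D$ sufficiently large, as claimed. Combined with Lemma \ref{lem:squarefree}, this also supplies infinitely many odd squarefree discriminants $D$ satisfying GAAC. I expect the only point that requires care to be the parity observation forcing $D \equiv 3 \Mod{4}$, since it is what guarantees that $4D$ is the correct discriminant to feed into the Dirichlet-based class-number bound; the remainder of the argument is routine.
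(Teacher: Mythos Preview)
Your proposal is correct and follows the same line as the paper: show $v_1=1$ by identifying $\varepsilon=n+\sqrt{D}$ as the fundamental unit, then invoke $h=O(\sqrt{D})$ to conclude $D\nmid v_1 h(4D)$ for large $D$. The paper simply cites the fundamental-unit fact as an exercise in Murty--Esmonde, whereas you supply a direct size argument (and the useful parity observation forcing $D\equiv 3\Mod 4$, hence $4D$ fundamental); your chain $\varepsilon<2n<n^2-1$ only handles $n\ge 3$, but $n=2$ is immediate by inspection, so this is harmless.
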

\begin{proof}  The fact that $\varepsilon$ is the fundamental unit of ${\mathbb Q}(\sqrt{D})$ is an exercise in \cite{Murty-Esmonde} (see Exercise 8.3.1 on page 115).  In the notation of conjecture GAAC, we have $v_1=1$.  As remarked earlier, $h(D)=O(\sqrt{|D|})$ and the assertion is now evident by Lemma \ref{lem:squarefree}.
    
\end{proof}
\section{The $p$-adic logarithm}
One can define the $p$-adic logarithm using the power series
\[
-\log(1-x) = \sum_{n=1}^\infty \frac{x^n}{n}.
\]
Let $v_p(n)$ be the largest power of $p$ dividing $n$. We denote by $|\cdot|_p$ the standard $p$-adic metric on $\Q$. Thus if $x=a/b$, $a,b\in \Z$, $\gcd(a,b)=1$ and $b\neq 0$, then 
\[
|x|_p = p^{v_p(b)-v_p(a)}.
\]
Since for a given prime $p$, $v_p(n) \leq \left[\log n / \log p \right]$, we see that if $|x|_p=\lambda <1$ then 
\[
-\log(1-x) = \sum_{n=1}^\infty \frac{x^n}{n}
\]
converges $p$-adically because 
\[
\left|\frac{x^n}{n} \right|_p \leq \lambda^n p^{v_p(n)} \to 0
\]
as $n\to \infty$. 

As usual, we denote by $\Q_p$ the completion of $\Q$ with respect to $|\cdot|_p$. Given $\Q_p$, we take its algebraic closure $\overline{\Q_p}$, we then complete it to obtain the $p$-adic analogue of the complex numbers denoted by $\C_p$. This field is algebraically closed (see for example, Proposition 5.2 of \cite{Washington}). 

The $p$-adic logarithm defined above for $|x|_p<1$ can now be extended to all of $\C_p^{\times}$ such that 
\begin{align*}
  \log xy = \log x + \log y 
\end{align*}
and $\log p =0$. To avoid confusion with the usual logarithm, we denote the $p$-adic logarithm as $\log_p$. 

To say that $|x|_p<1$ is equivalent to saying $v_p(x)>0$. If $v_p(x)\geq 1/(p-1)$, then in the series
\[
-\log(1-x) = \sum_{n=1}^\infty \frac{x^n}{n}, 
\]
we see that 
\[
v_p\left( \frac{x^{rp^v}}{p^v}\right) \geq \frac{rp^v - v(p-1)}{p-1} = \frac{r((p-1)+1)^v - v(p-1)}{p-1} \geq \frac{r{v\choose 2}(p-1)^2 }{p-1} \geq 1
\]
if $v\geq 2$, $r\geq 1$ or if $v=1$, $r\geq 2$ or if $v=0$ and $r\geq p-1$. Thus, we have 
\[
\left|\log(1-x)-\sum_{j=1}^p \frac{x^j}{j} \right|_p < \frac{1}{p}.
\]
In other words, 
\[
\log(1-x) = - \sum_{j=1}^p \frac{x^j}{j} \Mod{p}. 
\]
A simple extension of Wilson's theorem shows that for $1\leq j \leq p-1$,
\[
\frac{1}{p} {{p}\choose{j}} \equiv \frac{(-1)^{j-1}}{j} \Mod{p}.
\]
Therefore, 
\begin{align*}
   \log(1-x) & \equiv \sum_{j=1}^{p-1} \frac{1}{p}{{p}\choose{j}}(-1)^j x^j + \frac{1}{p}(-1)^p x^p \Mod{p}\\
   &\equiv \frac{1}{p} \left((1-x)^p-1 \right) \Mod{p}.
\end{align*}
This proves:
\begin{thm}[Ankeny, Artin, Chowla, 1952] \label{thm:4}
If $v_p(x-1)\geq 1/(p-1)$, then 
\[
\log_p(x) \equiv \frac{x^p-1}{p} \Mod{p}. 
\]
\end{thm}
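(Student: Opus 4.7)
The plan is to substitute $y = 1 - x$ so that $v_p(y) \geq 1/(p-1)$, and then manipulate the defining power series
\[
\log_p(x) = -\log_p(1 - y) = \sum_{n=1}^\infty \frac{y^n}{n}
\]
modulo $p$. The strategy is to show that all terms with $n > p$ are negligible $p$-adically, reducing the series to the finite truncation $-\sum_{j=1}^{p} y^j/j$, and then to recognize this truncation as a disguised form of $\frac{1}{p}((1-y)^p - 1)$ via a Wilson-type identity on binomial coefficients.

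For the first step, I would write $n = rp^v$ with $\gcd(r,p) = 1$ and bound
\[
v_p\!\left(\frac{y^n}{n}\right) = n\cdot v_p(y) - v \;\geq\; \frac{rp^v}{p-1} - v.
\]
Expanding $p^v = ((p-1) + 1)^v$ by the binomial theorem and isolating the term $1 + v(p-1)$, a short case check (on $v \geq 2$; $v = 1, r \geq 2$; and $v = 0, r \geq p$) shows this valuation is $\geq 1$ whenever $n \geq p+1$. Hence
\[
\log_p(1-y) \equiv -\sum_{j=1}^{p} \frac{y^j}{j} \pmod{p}.
\]

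The second step uses the congruence $\frac{1}{p}\binom{p}{j} \equiv \frac{(-1)^{j-1}}{j} \pmod{p}$ for $1 \leq j \leq p-1$, which is the mild extension of Wilson's theorem already flagged in the excerpt (it follows from $\binom{p}{j} = \frac{p}{j}\binom{p-1}{j-1}$ together with $\binom{p-1}{j-1} \equiv (-1)^{j-1} \pmod p$). Multiplying by $(-1)^j y^j$ and summing over $j$, the truncated logarithm series rewrites as
\[
-\sum_{j=1}^{p-1} \frac{y^j}{j} - \frac{y^p}{p} \;\equiv\; \sum_{j=1}^{p-1} \frac{1}{p}\binom{p}{j}(-y)^j + \frac{(-y)^p}{p} \;=\; \frac{(1-y)^p - 1}{p} \pmod{p},
\]
where the last equality is the binomial expansion of $(1-y)^p$ minus its $j=0$ and $j=p$ endpoints, which have been reinstated. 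Substituting $x = 1-y$ yields the desired congruence.

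The main technical obstacle is really just the valuation estimate in the first step, where one must verify that the ``problematic'' terms $y^{rp^v}/p^v$ with small $v$ do not accidentally have $p$-adic valuation less than $1$; all the case boundaries $v = 0, 1, 2$ and $r = 1, p-1, p$ need to be tracked explicitly. Everything else is bookkeeping: the Wilson identity is standard, and the recombination into $((1-y)^p - 1)/p$ is forced once the binomial coefficients appear.
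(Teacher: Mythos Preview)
Your proposal is correct and follows essentially the same approach as the paper: truncate the logarithm series at $j=p$ via the valuation estimate on $y^{rp^v}/p^v$ (using the same binomial expansion of $p^v=((p-1)+1)^v$ and the same case split on $v$), then convert the truncated sum into $\frac{(1-y)^p-1}{p}$ using the Wilson-type identity $\frac{1}{p}\binom{p}{j}\equiv \frac{(-1)^{j-1}}{j}\pmod p$. The only cosmetic difference is that the paper works with the variable $1-x$ throughout and renames at the end, whereas you introduce $y=1-x$ explicitly at the start.
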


We can relate this to Fermat quotients 
\[
F(a) = \frac{a^{p-1}-1}{p}
\]
for $(a,p)=1$ as follows. Writing $a^{p-1}= 1 + pF(a)$, we have
\[
\log_p(a) = \frac{1}{p-1}\log_p a^{p-1} = \frac{1}{p-1} \log_p(1+pF(a)).
\]
On the other hand, we have the $p$-adic series 
\[
\frac{1}{p-1} = -1 - p - p^2 -\cdots
\]
and
\[
-\log_p(1+x) = -x + \frac{x^2}{2} -\cdots 
\]
so that 
\begin{align*}
 \log_p(a) &= (-1 - p - p^2 -\cdots ) \left(pF(a) - \frac{p^2F(a)^2}{2}+ \cdots \right) \\
 &\equiv -pF(a) \Mod{p^2}.   
\end{align*}
Thus, we have:
\begin{thm}
    If $(a,p)=1$, then 
    \[
        F(a) \equiv -\frac{\log_p(a)}{p} \Mod{p}.
    \]
    In particular, if we have sets of integers $\{a_s\}_{s=1}^{(p-1)/2}$ and $\{b_s\}_{s=1}^{(p-1)/2}$ such that 
    \[
    \prod_{s=1}^{(p-1)/2} a_s = -1 + p \Omega 
    \]
    and 
    \[
    \prod_{s=1}^{(p-1)/2} b_s = 1 + p \Omega^*,
    \]
    then taking $p$-adic logarithms, we derive formula (3.2 ff) of \cite{Carlitz1953}:
    \begin{align*}
        \Omega &\equiv \sum_{s=1}^{(p-1)/2} F(a_s) \Mod{p}\\
        \Omega^* &\equiv \sum_{s=1}^{(p-1)/2} F(b_s) \Mod{p}
    \end{align*}
\end{thm}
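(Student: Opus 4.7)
The plan is to prove the two parts in sequence. First I would establish the central congruence $F(a) \equiv -\log_p(a)/p \pmod p$ by starting from the identity $a^{p-1} = 1 + pF(a)$ and writing
\[
\log_p(a) = \frac{1}{p-1}\log_p(a^{p-1}) = \frac{1}{p-1}\log_p(1+pF(a)).
\]
Since $v_p(pF(a))\geq 1 > 1/(p-1)$, the $p$-adic series $\log_p(1+x) = x - x^2/2 + \cdots$ converges and, more importantly, every term from $x^2$ onwards has $p$-adic valuation at least $2$. Combined with the $p$-adic expansion $\tfrac{1}{p-1} = -1-p-p^2-\cdots$, every term that is not $(-1)\cdot(pF(a))$ carries an extra factor of $p^2$. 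So $\log_p(a) \equiv -pF(a)\pmod{p^2}$, and dividing by $p$ gives the stated congruence.

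Second, for the formulas for $\Omega$ and $\Omega^*$ I would take $p$-adic logarithms of the defining product identities. For the $a_s$, rewrite $\prod a_s = -(1-p\Omega)$ and use $\log_p(-1)=0$ (which follows from $2\log_p(-1) = \log_p(1) = 0$) to obtain
\[
\log_p\!\Bigl(\prod_{s} a_s\Bigr) = \log_p(1-p\Omega) \equiv -p\Omega \pmod{p^2},
\]
the truncation being justified exactly as in the first part. On the other hand, by additivity of $\log_p$ and the congruence already proved,
\[
\log_p\!\Bigl(\prod_{s} a_s\Bigr) = \sum_{s=1}^{(p-1)/2} \log_p(a_s) \equiv -p\sum_{s=1}^{(p-1)/2} F(a_s) \pmod{p^2}.
\]
Equating the two expressions and dividing by $p$ yields $\Omega \equiv \sum_s F(a_s) \pmod p$. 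The derivation for $\Omega^*$ is entirely parallel, applying $\log_p$ directly to $1+p\Omega^*$ (no sign needed, as $\log_p(1)=0$).

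The only step requiring genuine care is the first one: controlling $p$-adic valuations tightly enough to justify truncating both the series for $\log_p(1+pF(a))$ and the $p$-adic expansion of $(p-1)^{-1}$ modulo $p^2$ simultaneously. Once that is established, the second part is purely formal, with $\log_p(-1)=0$ being the only nontrivial input beyond multiplicativity of $\log_p$ on $\mathbb{C}_p^\times$, which was set up in the preceding discussion.
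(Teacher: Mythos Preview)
Your proposal is correct and follows essentially the same route as the paper: the derivation of $\log_p(a)\equiv -pF(a)\pmod{p^2}$ via $a^{p-1}=1+pF(a)$, the series for $\log_p(1+x)$, and the expansion $\tfrac{1}{p-1}=-1-p-\cdots$ is exactly what the paper does in the paragraph preceding the theorem. For the ``in particular'' clause the paper merely says ``taking $p$-adic logarithms,'' whereas you spell out the details, including the needed fact $\log_p(-1)=0$; this is a welcome elaboration rather than a different method.
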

\begin{rmk}
    It is this crucial discussion regarding the $p$-adic logarithm that is missing in \cite{Carlitz1953}. 
\end{rmk}

\section{The group ring, Gauss sums, and congruences}
Another important idea of \cite{Ankeny1952} is the use of the group ring attached to a Galois group $\Gamma$ and how it operates on the field elements. More precisely, let $K/\Q$ be a finite Galois extension with Galois group $\Gamma$. The group ring $\Q[\Gamma]$ consists of elements
\[
\sum_{g\in \Gamma} a_g g, \,\,\,\,\, a_g\in \Q
\]
and we define multiplication via
\[
\left(\sum_{g\in \Gamma} a_g g \right)\left(\sum_{h\in \Gamma }b_h h \right) = \sum_{z\in \Gamma} \left( \sum_{gh=z} a_gb_h\right)z.
\]
If $\Gamma$ is abelian, then $\Q[\Gamma]$ is also abelian.

One can extend the action of $\Gamma$ on the field elements $K$ to the action of $\Q[\Gamma]$
in the obvious way: for $\alpha \in K$,
\[
\left(\sum_{g\in \Gamma} a_g g \right)(\alpha) := \sum_{g\in \Gamma} a_g g(\alpha).
\] 
In the case of the cyclotomic field $\Q(\zeta_m)$ where $\zeta_m$ denotes a primitive $m$-th root of unity, the Galois group $\Gamma$, of $\Q(\zeta_m)/\Q$ is $\Gamma \cong \left(\Z/m\Z \right)^{\times}$, where the automorphism $\sigma_a$ corresponding to the coprime residue class $a\Mod{m}$ is given via 
\[
\sigma_a (\zeta_m) = \zeta_m^a.
\]
If $m=p$ is prime, we let $\zeta = \zeta_p$ and following \cite{Ankeny1952}, we denote by 
\[
G = \sum_{j=1}^{p-1} \left(\frac{j}{p} \right)\sigma_j
\]
the element in the group ring $\Q[(\Z/p\Z)^{\times}]$. Here $\left(\frac{\cdot}{p} \right)$ denotes the Legendre symbol. This element can be viewed as a ``Gauss sum". Precisely we have: 
\begin{lem}\label{lem:4.1}
    Let $p\equiv 1 \Mod{4}$ be a prime. Suppose $\zeta$ is a primitive $p$-th root of unity. Then for any $1\leq a\leq p-1$, 
    \[
        G(\zeta^a) \equiv \sqrt{p}\left(\frac{a}{p} \right)
    \]
    where $\left( \frac{\cdot}{p} \right)$ is the Legendre symbol. 
\end{lem}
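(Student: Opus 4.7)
The plan is to reduce $G(\zeta^a)$ to the classical quadratic Gauss sum by a change of variables and then invoke Gauss's evaluation of that sum. First, unfold the definition of the group-ring action:
\[
G(\zeta^a) = \sum_{j=1}^{p-1} \left(\frac{j}{p}\right) \sigma_j(\zeta^a) = \sum_{j=1}^{p-1} \left(\frac{j}{p}\right) \zeta^{aj}.
\]
Since $a$ is coprime to $p$, the map $j \mapsto a^{-1}j$ is a bijection of $(\Z/p\Z)^\times$. Substituting and using complete multiplicativity of the Legendre symbol together with $\left(\frac{a^{-1}}{p}\right) = \left(\frac{a}{p}\right)$, this collapses to
\[
G(\zeta^a) = \left(\frac{a}{p}\right) \sum_{j=1}^{p-1} \left(\frac{j}{p}\right) \zeta^j.
\]

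It then suffices to establish that $g := \sum_{j=1}^{p-1} \left(\frac{j}{p}\right)\zeta^j$ equals $\sqrt{p}$ when $p \equiv 1 \Mod{4}$. The standard two-step route is first to show $g^2 = (-1)^{(p-1)/2}p$. One pairs $g$ with its complex conjugate $\bar g$: since the Legendre symbol is real-valued, $\bar g = \sum_j \left(\frac{j}{p}\right)\zeta^{-j}$, and the substitution $j \mapsto -j$ gives $\bar g = \left(\frac{-1}{p}\right) g = (-1)^{(p-1)/2} g$. A direct computation of $g \bar g$ using $\sum_{k=0}^{p-1}\zeta^{kn} = p \cdot \mathbf{1}_{p \mid n}$ yields $g\bar g = p$. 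Combining, $g^2 = p$, so $g = \pm \sqrt p$.

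The main obstacle is pinning down the sign: determining that $g = +\sqrt{p}$ rather than $-\sqrt{p}$ is the famous sign problem Gauss struggled with for several years, and its resolution requires genuinely more work than the squaring identity (one typically uses finite Fourier analysis, theta-function transformation, or a contour-integral argument). For the purposes of this lemma I would simply invoke Gauss's classical sign determination, which is readily available as, e.g., Lemma 6.1 of \cite{Washington}, rather than reprove it here. Once the sign is in hand, combining with the reduction of the first paragraph delivers $G(\zeta^a) = \left(\frac{a}{p}\right) \sqrt{p}$ as claimed.
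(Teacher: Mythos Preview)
Your proof is correct and follows essentially the same route as the paper: reduce $G(\zeta^a)$ to $\left(\frac{a}{p}\right)$ times the classical quadratic Gauss sum via a change of variables, then invoke the known evaluation of that sum (the paper cites Corollary~4.6 of \cite{Washington} directly, whereas you sketch the computation of $g^2=p$ before citing Gauss's sign determination, but this is the same argument with slightly more detail filled in).
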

\begin{proof}
    By definition we have
    \[
    G(\zeta^a) = \sum_{j=1}^{p-1} \left(\frac{j}{p} \right) \zeta^{aj}.
    \]
    Making the change of variables, $k=aj$ we have
    \[
    G(\zeta^a) = \sum_{k=1}^{p-1} \left(\frac{a^{-1}k}{p} \right) \zeta^k.
    \]
    By multiplicativity of the Legendre symbol and $(a/p) = (a^{-1}/p)$ we have 
    \[
    G(\zeta^a) = \left(\frac{a^{-1}}{p} \right) \tau
    \]
    where
    \[
    \tau = \sum_{k=1}^{p-1} \left(\frac{k}{p} \right)\zeta^k.
    \]
    By Corollary 4.6 of \cite{Washington}, $\tau$ evaluates to 
    \[
    \tau = \begin{cases}
        \sqrt{p}, & \text{if } p\equiv 1 \Mod{4}\\
        i\sqrt{p}, & \text{if } p\equiv 3 \Mod{4}.
    \end{cases}
    \]
    The result follows.  
\end{proof}

Given an element $\alpha\in K$ and $\beta = \sum_{g\in \Gamma} a_g g \in \Q[\Gamma]$, we will use the notation 
\[
\alpha^\beta := \prod_{g\in \Gamma} g(\alpha)^{a_g}.
\]
We remark that given $x\in \Q$, 
\[
x^G = 1 
\]
as the character sum vanishes, and $x$ is fixed by every group element.  

The Dirichlet class number formula can then be written using the group ring formalism as follows. If $p\equiv 1 \Mod{4}$,  $\epsilon$ is the fundamental unit of $\Q(\sqrt{p})$ and $h$ is the class number, then we have the familiar formula
\begin{equation}\label{eq:4.1}
2h \log \epsilon = - \sum_{j=1}^{p-1}\left(\frac{j}{p} \right) \log(1-\zeta^j).
\end{equation}
Here, we have crucially used that $p\equiv 1 \Mod{4}$ to evaluate the Gauss sum that appears in deriving (\ref{eq:4.1}). Exponentiating the expression in (\ref{eq:4.1}), we have that 
\[
\epsilon^{2h} = (1-\zeta  )^{-G}, 
\]
using our group ring formalism. As noted above, $x^G = 1$ for any $x\in \Q$, so 
\[
\epsilon^{2h} = ( 1 - \zeta )^{-G} = (\zeta -1 )^{-G}.
\]
If $n$ is a quadratic non-residue $\Mod{p}$, we make a change of variables in the sum defining $G$ by sending $j \mapsto nj$. Under this change of variables, we have
\[
G = \sum_{j=1}^{p-1} \left(\frac{nj}{p} \right) \sigma_{nj} = -\sum_{j=1}^{p-1} \left(\frac{j}{p} \right) \sigma_{nj}.
\]
Hence,
\[
(1-\zeta)^{-G} = (1-\zeta^n)^G
 \]
from which it follows that
\[
\epsilon^{2h} = (\zeta^n -1 ) ^{G}.
\]
Combining the two formulas for $\epsilon^{2h}$ gives
\begin{equation}\label{eq:4.2}
    \epsilon^{4h} = \left(\frac{\zeta^n -1 }{\zeta-1} \right)^G = \left(\frac{\zeta^n -1 }{n(\zeta-1)} \right)^G
\end{equation}
since $n^G=1$. 

We then have the following lemma, 
\begin{lem} \label{Lem:6}
    Let $\zeta$ be a primitive $p$-th root of unity for $p$ a rational prime. For a quadratic non-residue $\Mod{p}$, set 
    \[
    \alpha_j = \frac{\zeta^{nj}-1}{n(\zeta^j -1)}.
    \]
    Then $v_p(\alpha_j-1)\geq 1/(p-1)$.     
\end{lem}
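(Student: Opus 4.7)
The plan is to expand $\alpha_j$ as a geometric sum and then estimate each summand using the fact that $\zeta^a - 1$ is a uniformizer at $p$ whenever $p \nmid a$.

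First I would note that since the exponent $\zeta^n$ depends only on $n$ modulo $p$, we may assume $1 \leq n \leq p-1$ without loss of generality. Using the standard factorization
\[
\frac{\zeta^{nj} - 1}{\zeta^j - 1} = 1 + \zeta^j + \zeta^{2j} + \cdots + \zeta^{(n-1)j} = \sum_{k=0}^{n-1} \zeta^{jk},
\]
which is valid because $\zeta^j \neq 1$, I can rewrite
\[
\alpha_j - 1 = \frac{1}{n}\sum_{k=0}^{n-1}\zeta^{jk} - 1 = \frac{1}{n}\sum_{k=1}^{n-1}\bigl(\zeta^{jk} - 1\bigr).
\]

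Next I would invoke the well-known fact (extending $v_p$ uniquely to $\mathbb{Q}(\zeta)$ with the normalization $v_p(p) = 1$) that $v_p(\zeta^a - 1) = 1/(p-1)$ whenever $\gcd(a, p) = 1$. This follows from the factorization $p = \prod_{a=1}^{p-1}(1 - \zeta^a) \cdot (\text{unit})$, where each factor $1-\zeta^a$ has the same valuation by Galois conjugation, forcing $v_p(1-\zeta^a) = 1/(p-1)$. Since $1 \leq k \leq n-1 \leq p-2$ and $1 \leq j \leq p-1$, we have $p \nmid jk$, so each term $\zeta^{jk} - 1$ has $p$-adic valuation exactly $1/(p-1)$.

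Finally, the non-Archimedean triangle inequality gives
\[
v_p\!\left(\sum_{k=1}^{n-1}(\zeta^{jk} - 1)\right) \geq \min_{1 \leq k \leq n-1} v_p(\zeta^{jk} - 1) = \frac{1}{p-1},
\]
and since $\gcd(n, p) = 1$ implies $v_p(1/n) = 0$, we conclude $v_p(\alpha_j - 1) \geq 1/(p-1)$ as desired. There is no real obstacle here; the argument is essentially a one-line calculation once one recognizes the geometric series and recalls the ramification of $p$ in $\mathbb{Z}[\zeta]$. The only minor subtlety is being explicit about the normalization of the valuation and verifying that all exponents $jk$ appearing in the sum are coprime to $p$, which follows from $n < p$.
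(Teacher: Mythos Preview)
Your proof is correct and follows essentially the same approach as the paper: both expand $\alpha_j$ as the geometric sum $\sum_{k=0}^{n-1}\zeta^{jk}$ and then use that each $\zeta^{jk}-1$ lies in the prime above $p$. The only cosmetic difference is that the paper phrases this as $\alpha_j\equiv 1\pmod{1-\zeta}$ and then raises to the $(p-1)$-th power to pass to a congruence mod $p$, whereas you invoke the normalized valuation $v_p(\zeta^a-1)=1/(p-1)$ and the ultrametric inequality directly; your version is arguably the cleaner of the two.
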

\begin{proof}
    Writing $\omega = \zeta^j$, we have that 
    \[
    \alpha_j = \frac{1-\omega^n}{1-\omega} = \sum_{k=0}^{n-1}\omega^k.
    \]
    Since $\zeta\equiv 1\Mod{1-\zeta}$, we deduce that $\omega^k\equiv 1 \Mod{1-\zeta}$ for all $1\leq k\leq n-1$. Hence, $\alpha_j \equiv 1 \Mod{1-\zeta}$. Therefore,  $(1-\zeta)^{p-1}$ divides $(\alpha_j-1)^{p-1}$. We have that $(p)=(1-\zeta)^{p-1}$. Hence,
    \[
        (\alpha_j-1)^{p-1} \equiv 0 \Mod{p}.
    \]
    In particular, as $v_p(x)$ is multiplicative we have that 
    \[
        (p-1)v_p(\alpha_j-1) \geq 1.
    \]
    From which the desired conclusion follows.
\end{proof}
\noindent
Observing that
\[
\left(\frac{\zeta^n-1}{n(\zeta-1)}\right)^G = \prod_{j=1}^{p-1} \left(\frac{\zeta^{nj}-1}{n(\zeta^j-1)}\right)^{\left(\frac{j}{p} \right)}
\]
 and applying the $p$-adic logarithm to both sides of (\ref{eq:4.2}), we have that  
\[
4h\log_p \epsilon  \equiv \sum_{j=1}^{p-1}  \left(\frac{j}{p} \right)  \log_p   \left(\frac{\zeta^{nj}-1}{n(\zeta^j-1)}\right) \Mod{p}.
\]
By Theorem \ref{thm:4} and Lemma \ref{Lem:6} we deduce the congruence
\[
4h\log_p \epsilon \equiv  \frac{1}{p}\sum_{j=1}^p \left(\frac{j}{p} \right)\left( \left(\frac{\zeta^{nj}-1}{n(\zeta^j-1)} \right)^{p}-1\right)\equiv \frac{1}{n^p}G\left(\frac{\left(\frac{\zeta^{n}-1}{\zeta-1} \right)^{p}-n}{p} \right) \Mod{p}. 
\]
This is equivalent to 
\begin{equation}\label{Eq:groupRing}
4h \log_p(\epsilon) \equiv \frac{1}{n} G(f(\zeta)) \Mod{p}    
\end{equation}
where
\[
f(x) = \frac{1}{p} \left\{ \left(\frac{x^n-1}{x-1}\right)^p - \left(\frac{x^{np}-1}{x^p-1} \right) \right\}. 
\]
Noting that we can factor the two rational functions in the definition of $f(x)$, we deduce that $f(x)$ is a polynomial in $x$:
\begin{equation}\label{eq:10}
    f(x) = \frac{1}{p} \left( \left(\sum_{k=0}^{n-1}x^k\right)^p - \sum_{j=0}^{n-1} x^{kp} \right).
\end{equation}
Moreover, as $\zeta^p=1$, we see that the second sum in (\ref{eq:10}) equals $n$ when evaluated at $\zeta$.  Therefore,  $f(\zeta) = \frac{1}{p}\left(\left(\frac{\zeta^{n}-1}{\zeta-1} \right)^{p}-n \right)$. Following \cite{Ankeny1952} we wish to simplify $f(x) \Mod{p}$. In particular:
\begin{lem} \label{lem:7}
For $f(x)$ as defined above, 
\[
f(x) \equiv -\sum_{k=1}^{p-1} \sum_{j=0}^\infty \frac{1}{k} x^{nk + pj} + \sum_{k=1}^{p-1} \sum_{j=0}^{n-1} \frac{j+1}{k}x^{k+pj} \Mod{p}
\]
where the first sum is over $nk+pj< pn$. 
\end{lem}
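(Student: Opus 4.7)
The plan is to compute $h(x)^p$ modulo $p^2$ via the factorization $h(x) = (1-x^n)/(1-x)$ and then to extract $f(x) \Mod{p}$ by formal power series manipulations in $\Z_{(p)}[[x]]$.

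First, record the expansions $(1-x)^p = (1-x^p) + p\tilde r(x)$ and $(1-x^n)^p = (1-x^{np}) + p\tilde R(x)$. The Wilson-type congruence $\binom{p}{k}/p \equiv (-1)^{k-1}/k \Mod{p}$ for $1 \leq k \leq p-1$ (already used in the derivation of Theorem \ref{thm:4}) yields
\[
\tilde r(x) \equiv -\sum_{k=1}^{p-1}\frac{x^k}{k} \Mod{p}, \qquad \tilde R(x) \equiv -\sum_{k=1}^{p-1}\frac{x^{nk}}{k} \Mod{p}.
\]
Using $(1+pu)^{-1} \equiv 1-pu \Mod{p^2}$ together with $h(x^p) = (1-x^{np})/(1-x^p)$, one finds
\[
h(x)^p \equiv h(x^p) + p\cdot\frac{\tilde R(x) - h(x^p)\tilde r(x)}{1-x^p} \Mod{p^2},
\]
and dividing by $p$ gives
\[
f(x) \equiv \frac{h(x^p)\sum_{k=1}^{p-1} x^k/k - \sum_{k=1}^{p-1} x^{nk}/k}{1-x^p} \Mod{p}.
\]

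Next, I would expand the two pieces as formal power series. Using $1/(1-x^p)^2 = \sum_{m\geq 0}(m+1)x^{pm}$ and $h(x^p)/(1-x^p) = (1-x^{np})/(1-x^p)^2$, a routine computation gives
\[
\frac{h(x^p)}{1-x^p} = \sum_{m=0}^{n-1}(m+1)x^{pm} + n\sum_{m=n}^\infty x^{pm}.
\]
Multiplying by $\sum_{k=1}^{p-1}x^k/k$ and combining with $-\frac{1}{1-x^p}\sum_{k=1}^{p-1}x^{nk}/k$, the expression for $f(x)$ decomposes into three pieces: the desired finite part $\sum_{k=1}^{p-1}\sum_{j=0}^{n-1}\frac{j+1}{k}x^{k+pj}$; the ``main'' part $-\sum_{k=1}^{p-1}\sum_{j\geq 0}\frac{x^{nk+pj}}{k}$; and a tail $+n\sum_{k=1}^{p-1}\sum_{m\geq n}\frac{x^{k+pm}}{k}$.

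The crux is to show that the portion of the main part with $nk+pj \geq np$ cancels the tail. Since $\gcd(n,p)=1$, the map $k \mapsto t := nk \bmod p$ is a bijection on $\{1,\ldots,p-1\}$, and under this map $1/k \equiv n/t \Mod{p}$. Any exponent $nk+pj$ with $1 \leq k \leq p-1$, $j \geq 0$, and $nk+pj \geq np$ can be uniquely rewritten as $t+pm$ with $t \in \{1,\ldots,p-1\}$ and $m \geq n$ (the bound $m \geq n$ follows from $nk \leq n(p-1) < np$), so the reindexing identifies
\[
-\sum_{k=1}^{p-1}\sum_{\substack{j\geq 0\\ nk+pj \geq np}}\frac{x^{nk+pj}}{k} = -n\sum_{t=1}^{p-1}\sum_{m\geq n}\frac{x^{t+pm}}{t},
\]
which cancels the tail exactly, leaving the formula in the lemma. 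The main obstacle is precisely this bookkeeping cancellation, which must happen because $f(x)$ is a polynomial while the intermediate series expressions are not; once one spots the bijection $k \leftrightarrow nk \Mod{p}$ and the matching $1/k \leftrightarrow n/(nk)$, the rest is careful but routine indexing.
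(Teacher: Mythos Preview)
Your proof is correct and follows essentially the same route as the paper: both arrive at the intermediate expression
\[
f(x)\equiv \frac{h(x^p)\sum_{k=1}^{p-1}x^k/k - \sum_{k=1}^{p-1}x^{nk}/k}{1-x^p}\pmod{p},
\]
expand via $1/(1-x^p)$ and $1/(1-x^p)^2$, and then cancel the tail by the substitution $k\mapsto nk\bmod p$. Your packaging of the first step as ``compute $h(x)^p\bmod p^2$'' is a bit cleaner than the paper's common-denominator manipulation, but the content is the same; one small quibble is that your displayed cancellation identity should be $\equiv\pmod p$ rather than $=$, since $1/k\equiv n/t$ only holds modulo $p$.
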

\begin{proof}
We write $f(x)$ as 
\[
f(x)   = \frac{(x^n-1)^p(x^p-1) - (x^{pn}-1)(x-1)^p}{p} \cdot \frac{1}{(x-1)^p(x^p-1)}.
\]
Noting that $\frac{1}{p}{p\choose k} \equiv \frac{(-1)^{k-1}}{k} \Mod{p}$ for $1\leq k \leq p-1$ we have
\[
\frac{(x-1)^p}{p} \equiv \frac{x^p-1}{p} - \sum_{k=1}^{p-1} \frac{x^{p-k}}{k} \Mod{p}. 
\]
Writing $\ell= p-k$, the above expansion $f(x)$ can be written as 
\[
f(x) \equiv \sum_{\ell=1}^{p-1} \frac{x^{n\ell}}{\ell (x^p-1)} - \sum_{\ell =1 }^{p-1} \frac{x^\ell}{\ell} \cdot \frac{x^{pn}-1}{(x^p-1)^2} \Mod{p}.
\]
Writing
\begin{align*}
    \frac{1}{1-x^p} = \sum_{j=0}^{\infty} x^{pj}\,\,\,\,\, \text{ and }\,\,\,\,\, \frac{1}{(1-x^p)^2} = \sum_{j=0}^\infty (j+1)x^{pj},
\end{align*}
\[
f(x) \equiv -\sum_{\ell=1}^{p-1} \sum_{j=0}^\infty \frac{x^{n\ell + pj}}{\ell} + \sum_{\ell=1}^{p-1}\sum_{j=0}^\infty \frac{j+1}{\ell} x^{pj+\ell} - \sum_{\ell=1}^{p-1} \sum_{j=0}^\infty \frac{j+1}{\ell} x^{pj+pn+\ell} \Mod{p}.
\]
Changing the index of summation in the last sum above from $j \mapsto j-n$ we have
\[
f(x) \equiv -\sum_{\ell=1}^{p-1} \sum_{j=0}^\infty \frac{x^{n\ell + pj}}{\ell} + \sum_{\ell=1}^{p-1}\sum_{j=0}^\infty \frac{j+1}{\ell} x^{pj+\ell} - \sum_{\ell=1}^{p-1} \sum_{j=n}^\infty \frac{j+1}{\ell} x^{pj+\ell}  + \sum_{\ell = 1}^{p-1} \sum_{j=n}^{\infty} \frac{n}{\ell} x^{pj+\ell}\Mod{p}.
\]
The two middle sums cancel to give a finite sum. Next, we observe that the exponent in the last sum is over numbers greater than $pn$ and coprime to $p$. As such, we can write the exponent $u=nv + pj$ for some $v\in \{1, \ldots ,p-1\}$. As such, 
\[
u = nv +pj > pn \Rightarrow pj> n(p-v)>0.
\]
Hence, the exponent $u$ appears in the first sum as well. Making a change of variables in the first sum of $\ell \mapsto n\ell$ we see that the term corresponding to the exponent $u$ in the first sum has coefficient $-n/u$. Therefore, the last sum entirely vanishes. In effect, 
\[
f(x)\equiv -\sum_{\ell=1}^{p-1} \sum_{j=0}^\infty \frac{1}{\ell} x^{n\ell +pj} + \sum_{\ell=1}^{p-1}\sum_{j=0}^{n-1} \frac{j+1}{\ell} x^{pj+\ell} \Mod{p}
\]
where the first sum is over all $n\ell+pj < pn$. 
\end{proof}
Finally, we require one more computational tool.
\begin{lem}\label{lem:4.4}
    Suppose $p\equiv 1 \Mod{4}$. Then
    \[
    S = \sum_{k=1}^{p-1}\frac{1}{k} \left(\frac{k}{p} \right) \equiv 0 \Mod{p}.
    \]
\end{lem}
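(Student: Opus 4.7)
The plan is to exploit the symmetry $k \mapsto p-k$ in the sum, which works precisely because $p \equiv 1 \pmod 4$ forces $\left(\frac{-1}{p}\right) = 1$.

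First I would apply the substitution $k \mapsto p - k$ in the sum. Since the new index also runs over $\{1, 2, \ldots, p-1\}$, we obtain
\[
S = \sum_{k=1}^{p-1} \frac{1}{p-k}\left(\frac{p-k}{p}\right).
\]
Next, I would separately simplify each factor modulo $p$. On the one hand, $\frac{1}{p-k} \equiv -\frac{1}{k} \pmod p$. On the other hand, by multiplicativity of the Legendre symbol, $\left(\frac{p-k}{p}\right) = \left(\frac{-1}{p}\right)\left(\frac{k}{p}\right)$, and here the hypothesis $p \equiv 1 \pmod 4$ enters decisively: it forces $\left(\frac{-1}{p}\right) = 1$, so $\left(\frac{p-k}{p}\right) = \left(\frac{k}{p}\right)$.

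Combining these two observations gives
\[
S \equiv -\sum_{k=1}^{p-1} \frac{1}{k}\left(\frac{k}{p}\right) = -S \pmod p,
\]
hence $2S \equiv 0 \pmod p$. Since $p$ is odd, we may divide by $2$ to conclude $S \equiv 0 \pmod p$.

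There is essentially no obstacle here: the proof is a direct pairing argument, and the only subtlety is remembering that the congruence $p \equiv 1 \pmod 4$ is exactly what makes the Legendre symbol invariant under negation, which is what allows the two sides of the identity to combine with a sign rather than cancel trivially. One could also remark that for $p \equiv 3 \pmod 4$ the same manipulation yields $S \equiv S$, giving no information, which explains why the hypothesis is necessary for this approach (and indeed the conclusion itself fails in that case, being related to the class number of $\mathbb{Q}(\sqrt{-p})$).
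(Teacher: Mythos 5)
Your proof is correct and follows essentially the same route as the paper's: the substitution $k \mapsto p-k$, the identity $\left(\frac{-1}{p}\right)=1$ for $p \equiv 1 \Mod{4}$, and the conclusion $2S \equiv 0 \Mod{p}$. No substantive differences.
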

\begin{proof}
    We observe that 
    \[
    S \equiv \sum_{k=1}^{p-1} \frac{1}{p-k} \left(\frac{p-k}{p} \right) \equiv -\sum_{k=1}^{p-1}\frac{1}{k} \left(\frac{-k}{p}\right) \Mod{p}.
    \]
    Since $p\equiv 1 \Mod{4}$, $(-1/p)=1$. Adding the two representations of $S$, we deduce that
    \[
    2S \equiv 0 \Mod{p}.
    \]
    Hence, $S\equiv 0 \Mod{p}$.
\end{proof}

\begin{thm}[Ankeny, Artin, Chowla, 1952]
    Suppose $p\equiv 1 \Mod{4}$. Let $h$ denote the class number of $\Q(\sqrt{p})$ and $\epsilon = (t+u\sqrt{p})/2$ be the fundamental unit. Then 
    \[
    4h \frac{u}{t} \equiv -\frac{1}{n}\sum_{k=1}^{p-1} \frac{1}{k} \left[ \frac{nk}{p} \right] \left(\frac{k}{p} \right) \Mod{p}
    \]
    for any quadratic non-residue $n \Mod{p}$. 
\end{thm}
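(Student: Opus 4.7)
My plan is to combine the group-ring formula (\ref{Eq:groupRing}), which states $4h\log_p \epsilon \equiv \frac{1}{n}G(f(\zeta)) \Mod{p}$, with two computations: a $p$-adic evaluation of $\log_p \epsilon$ in terms of $u/t$, and an evaluation of $G(f(\zeta))$ via Lemmas \ref{lem:4.1}, \ref{lem:7}, and \ref{lem:4.4}.

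For the first computation, factor $\epsilon = \frac{t}{2}\bigl(1 + \frac{u}{t}\sqrt{p}\bigr)$. The norm equation $t^2 - pu^2 = -4$ (valid since $p\equiv 1 \Mod{4}$) gives $t^2 \equiv -4 \Mod{p}$, so $p\nmid t$ and $u/t \in \Z_p$. Because $t/2 \in \Q^\times$ is a $p$-adic unit, $(t/2)^{p-1} = 1 + p\lambda$ for some $\lambda \in \Z_p$, giving $\log_p(t/2) = \frac{1}{p-1}\log_p(1+p\lambda)$ with $v_p \geq 1$; hence it vanishes mod $p$. For the second factor, $v_p\bigl(\frac{u}{t}\sqrt{p}\bigr) \geq \tfrac{1}{2} > \tfrac{1}{p-1}$, so the expansion $\log_p(1+x) = x - x^2/2 + \cdots$ converges and all higher-order terms have $v_p \geq 1$. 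Therefore $\log_p \epsilon \equiv \frac{u}{t}\sqrt{p} \Mod{p}$, and (\ref{Eq:groupRing}) yields
\[
\frac{4hu}{t}\sqrt{p} \equiv \frac{1}{n}G(f(\zeta)) \Mod{p}.
\]

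For the second computation, I apply $G$ term-by-term to the expression for $f(\zeta)$ from Lemma \ref{lem:7}. By Lemma \ref{lem:4.1}, $G(\zeta^m) = \sqrt{p}\bigl(\tfrac{m}{p}\bigr)$ when $p\nmid m$ and $0$ otherwise. Using multiplicativity of the Legendre symbol together with $\bigl(\tfrac{n}{p}\bigr)=-1$, each exponent of the form $nk+pj$ contributes $-\sqrt{p}\bigl(\tfrac{k}{p}\bigr)$, while each $k+pj$ contributes $\sqrt{p}\bigl(\tfrac{k}{p}\bigr)$. For fixed $k \in \{1,\ldots,p-1\}$ the constraint $nk+pj < pn$ has exactly $n - \left[\tfrac{nk}{p}\right]$ admissible values of $j$ (using that $nk/p\notin\Z$ since $p\nmid nk$), while in the second double-sum $\sum_{j=0}^{n-1}(j+1) = \tfrac{n(n+1)}{2}$. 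Collecting terms and then invoking Lemma \ref{lem:4.4} to annihilate the $k$-independent pieces $n$ and $\tfrac{n(n+1)}{2}$ leaves
\[
G(f(\zeta)) \equiv -\sqrt{p}\sum_{k=1}^{p-1}\frac{1}{k}\left(\frac{k}{p}\right)\left[\frac{nk}{p}\right] \Mod{p}.
\]
Substituting into the boxed congruence and cancelling $\sqrt{p}$ proves the theorem.

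The main obstacle is the careful bookkeeping in applying Lemma \ref{lem:7}: one must correctly count the admissible $j$-range as $n - \left[\tfrac{nk}{p}\right]$, and verify that exactly the two ``constant'' pieces (the $n$ from this count and the $\tfrac{n(n+1)}{2}$ from the second double-sum) are what Lemma \ref{lem:4.4} eliminates, so that only the desired floor-function piece survives cleanly.
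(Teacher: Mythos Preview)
Your proposal is correct and follows essentially the same approach as the paper: you invoke (\ref{Eq:groupRing}), evaluate $G(f(\zeta))$ via Lemmas \ref{lem:7}, \ref{lem:4.1}, \ref{lem:4.4}, and combine with $\log_p\epsilon\equiv\frac{u}{t}\sqrt{p}\pmod p$. The only cosmetic difference is that the paper first substitutes $\ell\mapsto p-\ell$ (turning the $j$-count into $1+\left[\frac{n\ell}{p}\right]$) whereas you count directly as $n-\left[\frac{nk}{p}\right]$; both routes kill the constant pieces with Lemma \ref{lem:4.4} and leave the identical floor sum.
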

\begin{proof}
    Suppose $\zeta$ is a primitive $p$-th root of unity. Then by Lemma \ref{lem:7}, 
    \[
    f(\zeta) = -\sum_{\ell=1}^{p-1} \sum_{j=0}^\infty \frac{1}{\ell} \zeta^{n\ell} + \sum_{\ell=1}^{p-1} \frac{1}{\ell} (j+1)\zeta^\ell \Mod{p}
    \]
    where the first sum is over all $n\ell + pj < pn$. In particular, there are $1 + \left[ n\ell /p\right]$ such $j$ in the first sum. Changing the index of summation in the first sum from $\ell \mapsto p-\ell$ we have  
    \[
    f(\zeta) \equiv  \sum_{\ell=1}^{p-1} \frac{1+ \left[\frac{n\ell}{p} \right]}{\ell} \zeta^{-n\ell} + \frac{n(n+1)}{2}\sum_{\ell=1}^{p-1} \frac{1}{\ell} \zeta^\ell \Mod{p}.
    \]
    Then applying the element $G$ as defined in (\ref{Eq:groupRing}) and using linearity of $G$, we have
    \[
        G(f(\zeta)) \equiv \sum_{\ell=1}^{p-1} \frac{1+ \left[\frac{n\ell}{p} \right]}{\ell} G(\zeta^{-n\ell}) + \frac{n(n+1)}{2} \sum_{\ell=1}^{p-1} \frac{1}{\ell}G(\zeta^\ell) \Mod{p}.
    \]
  By Lemma \ref{lem:4.1}, this reduces to 
  \[
    G(f(\zeta)) \equiv \sqrt{p}\sum_{\ell=1}^{p-1} \left(\frac{-n\ell}{p} \right)\frac{1+ \left[\frac{n\ell}{p} \right]}{\ell}  + \frac{n(n+1)}{2} \sqrt{p}\sum_{\ell=1}^{p-1} \frac{1}{\ell} \left(\frac{\ell}{p} \right) \Mod{p}
  \]
    Expanding the first sum out, applying Lemma \ref{lem:4.4}, and noting that $-n$ is a quadratic non-residue $\Mod{p}$, we deduce that 
    \[
    G(f(\zeta)) \equiv - \sqrt{p} \sum_{\ell=1}^{p-1} \frac{1}{\ell} \left[\frac{n\ell}{p} \right] \left(\frac{\ell}{p} \right) \Mod{p}. 
    \]
    Therefore, 
    \[
    4h\log_p(\epsilon) \equiv - \sqrt{p} \sum_{\ell=1}^{p-1} \frac{1}{\ell} \left[\frac{n\ell}{p} \right] \left(\frac{\ell}{p} \right)\Mod{p}. 
    \]
    Noting that
    \[
    \log_p(\epsilon) \equiv \frac{u\sqrt{p}}{t} \Mod{p},
    \]
    we conclude the result. 
\end{proof}

\section{Concluding remarks}
We have presented a self-contained treatment of some of the results in \cite{Ankeny1952} and \cite{Carlitz1953}. We generalized the AAC conjecture in Theorem \ref{thm:2.1} and thus, related the AAC conjecture to a congruence of Fermat quotients. This generalization allowed us to obtain a generalization of a result of Eisenstein relating Fermat quotients to sums of harmonic numbers. \\

The function field analogue of the Ankeny-Artin-Chowla conjecture has been studied by Yu and Yu \cite{Yu1998}.
The case $p\equiv 3 \pmod 4 $ has been studied by Mordell \cite{mordell1} and \cite{mordell2} and he
made a similar conjecture.  In all cases, we have an interesting connection to Bernoulli numbers.  In particular, if there are infinitely many regular primes (that is, primes $p$ that do not divide the class number of the $p$-th cyclotomic field), then the Ankeny-Artin-Chowla and the Mordell conjectures are true for those primes $p$.
It is unknown at present whether there are infinitely many regular primes though the conjecture is that there is a positive density of such primes.  
A relevant survey article by Slavutskii \cite{Slavutskii2004} is worth a careful study.

\renewcommand\refname{References}
\bibliography{References}
\end{document}